\documentclass[preprint,12pt,numbers]{elsarticle}

\usepackage{amsmath,amsfonts,amssymb,amsthm}
\usepackage{mathtools}
\usepackage{graphicx}
\usepackage{subcaption}
\usepackage{booktabs}
\usepackage{multirow}
\usepackage{algorithm}
\usepackage{algorithmic}
\usepackage{wrapfig}
\usepackage[protrusion=true,expansion=false]{microtype}
\usepackage{enumitem}
\usepackage{comment}
\usepackage{hyperref}
\usepackage{xcolor}
\usepackage{placeins}

\graphicspath{{./figs/}{./figures/}{../Figure/}{../figures/}}

\allowdisplaybreaks
\theoremstyle{plain}
\theoremstyle{definition}

\newtheorem{lemma}{Lemma}[section]
\newtheorem{theorem}{Theorem}[section]
\newtheorem{proposition}{Proposition}[section]

\theoremstyle{remark}
\newtheorem{remark}{Remark}[section]

\newcommand{\YY}[1]{\textcolor{red!60!black}{#1}}

\definecolor{AIviolet}{RGB}{140,0,160}

\journal{Computers and Mathematics with Applications}

\begin{document}

\begin{frontmatter}

%% Title, authors and addresses

%% use the tnoteref command within \title for footnotes;
%% use the tnotetext command for theassociated footnote;
%% use the fnref command within \author or \affiliation for footnotes;
%% use the fntext command for theassociated footnote;
%% use the corref command within \author for corresponding author footnotes;
%% use the cortext command for theassociated footnote;
%% use the ead command for the email address,
%% and the form \ead[url] for the home page:
\title{Deterministic Adam-Inspired Methods with Accelerated Convergence Rate}

\author[aff1]{Yaxin Yu}
\ead{yaxin_yu@stu.scu.edu.cn}

\author[aff2]{Long Chen\corref{cor1}}
\ead{lchen7@uci.edu}

\author[aff2]{Zeyi Xu}
\ead{zeyix1@uci.edu}

\affiliation[aff1]{organization={School of Mathematics, Sichuan University},
	city={Chengdu},
	state={Sichuan},
	postcode={610065},
	country={CHINA}}

\affiliation[aff2]{organization={Department of Mathematics, University of California, Irvine},
	city={Irvine},
	state={CA},
	postcode={92697},
	country={USA}}

%\cortext[cor1]{Corresponding author.}

%% Abstract
\begin{abstract}
Adam is widely used, but its convergence theory remains incomplete even in the deterministic full-batch setting because momentum and adaptive preconditioning are tightly coupled. For smooth convex objectives, we split the momentum variable through variable-and-operator splitting, which reveals the acceleration mechanism. We then combine a Hessian-driven correction with Adam-style feedback based on the gradient magnitude. The resulting Adam-HNAG (Hessian-driven Nesterov accelerated gradient with Adam-style adaptive preconditioning) flow admits a nonnegative energy that decays exponentially. Its discretization yields two methods, \emph{Adam-HNAG} and the synchronous variant \emph{Adam-HNAG-s}. Under the stated trajectory-bound and consistency conditions, both methods satisfy a discrete Lyapunov contraction. If the exact adaptive steps are accepted, this contraction gives an $O(k^{-2})$ objective-value bound. Numerical experiments illustrate their behavior. These results apply to the proposed methods, not to the original Adam recursion.
\end{abstract}

%%Graphical abstract
%%\begin{graphicalabstract}
%\includegraphics{grabs}
%%\end{graphicalabstract}

%%Research highlights
%%\begin{highlights}
%%\item Research highlight 1
%%\item Research highlight 2
%%\end{highlights}

%% Keywords
\begin{keyword}
Adam \sep  Hessian-driven acceleration \sep convex optimization \sep adaptive preconditioning

%% keywords here, in the form: keyword \sep keyword

%% PACS codes here, in the form: \PACS code \sep code

%% MSC codes here, in the form: \MSC code \sep code
%% or \MSC[2008] code \sep code (2000 is the default)

\end{keyword}

\end{frontmatter}

%% Add \usepackage{lineno} before \begin{document} and uncomment 
%% following line to enable line numbers
%% \linenumbers

%% main text
%%

%%%%%%%%%%%%%%%%%%%%%%%%%%%%%%%%
\section{Introduction} 
%%%%%%%%%%%%%%%%%%%%%%%%%%%%%%%%

Adam~\cite{kingma2017adammethodstochasticoptimization}, short for {\it Adaptive Moment Estimation}, is one of the most widely used optimization methods in deep learning. We study its deterministic full-batch version for the unconstrained convex optimization problem
\begin{equation}\label{eq:problem}
	\min_{x\in\mathbb{R}^d} f(x),
\end{equation}
where $f:\mathbb{R}^d\to\mathbb{R}$ is continuously differentiable and convex. Let $m_k\in\mathbb{R}^d$ be the first-moment estimate and let $V_k\in\mathbb{R}^{d\times d}$ be the diagonal matrix of second-moment estimates. The method starts from prescribed $x_0,m_0\in\mathbb{R}^d$ and a diagonal matrix $V_0\succ0$. For $u\in\mathbb{R}^d$, $\operatorname{diag}(u.^2)$ denotes the diagonal matrix with entries $u_i^2$, and $\sqrt{V_k}$ is taken entrywise. Without bias correction or numerical stabilization, the deterministic full-batch Adam iteration is
\begin{equation}\label{eq:adam}
	\left\{
	\begin{aligned}
		x_{k+1} &= x_k - \eta (\sqrt{V_k})^{-1} m_k,\\
		m_{k+1} &= \beta_1 m_k + (1-\beta_1) \nabla f(x_{k+1}), \\
		V_{k+1} &= \beta_2 V_k + (1-\beta_2)\operatorname{diag}(\nabla f(x_{k+1}).^2),
	\end{aligned}
	\right.
\end{equation}
where $\eta>0$ is the learning rate and $\beta_1,\beta_2\in(0,1)$ are momentum parameters. Since $V_0\succ0$ and $\beta_2>0$, every $V_k$ remains positive definite.

%With the usual zero-initialized Adam indexing, bias correction can be implemented without changing the structural form of the preconditioned step by using the iteration-dependent effective learning rate
% $$\tilde{\eta}_k=\frac{\sqrt{1-\beta_2^k}}{1-\beta_1^k}\,\eta,\qquad k\ge 1.$$
% This scalar rescaling is omitted in \eqref{eq:adam} because it does not change the structural coupling between momentum and diagonal adaptive preconditioning studied below.
%\AI{With \(\varepsilon>0\), exact equivalence also requires the stabilization term to be rescaled. This detail does not affect the new schemes.}

Despite its empirical success, the convergence theory of Adam remains incomplete. Existing guarantees mainly concern online settings. In online convex optimization, one studies the regret
\begin{equation}\label{eq:online-regret}
R_T:=\sum_{t=1}^T f_t(x_t)-\min_x \sum_{t=1}^T f_t(x),
\end{equation}
and obtains sublinear bounds such as $R_T=O(\sqrt{T})$ under suitable assumptions~\cite{kingma2017adammethodstochasticoptimization, reddi2019convergenceadam,Huang_2019}. However, such regret guarantees do not imply convergence of the iterates for problem  \eqref{eq:problem} minimizing a single convex objective. Indeed, Reddi et al.~\cite{reddi2019convergenceadam} gave a synthetic convex counterexample showing that Adam can fail in that setting.
In nonconvex optimization, several works prove convergence of gradient norms under additional assumptions on stepsizes and momentum parameters~\cite{de2018convergenceguaranteesrmspropadam,zou2019sufficientconditionconvergencesadam, chen2019convergenceclassadamtypealgorithms, défossez2022simpleconvergenceproofadam, zhou2024convergenceadaptivegradientmethods,zhang2022adamconverge}. Online regret bounds control cumulative losses for time-varying objectives, whereas nonconvex analyses typically establish convergence to stationary points. Thus, they do not directly yield the objective-value bound considered here for a single smooth convex function. Deterministic batch Adam has also been studied through local convergence, limit cycles, and sharp local rates near strict minimizers~\cite{bock2019local,bock2022cycles,dereich2025sharp}. These results are important, but they do not establish a global $O(k^{-2})$ objective-value bound for the methods developed here. Conversely, our results do not analyze the original Adam recursion.

Continuous-time models provide a complementary perspective on Adam and related adaptive methods; see, for example, \cite{barakat2020convergencedynamicalbehavioradam, dereich2025odeapproximationadamalgorithm, Bhattacharjee_2024, ma2021qualitativestudydynamicbehavior, gould2024continuoustimeanalysisadaptiveoptimization, li2018stochasticmodifiedequationsdynamics, malladi2024sdesscalingrulesadaptive, heredia2026adamadamlikelagrangianssecondorder, heredia2025modelingadagradrmspropadam}. These works clarify the dynamics, stability, and limiting behavior of adaptive methods, but they do not yield a simple route to the global estimate proved below. Moreover, Da Silva et al.~\cite{dasilva2019generaldifferentialequationsmodel} show that, even in continuous time, Adam may converge more slowly than non-adaptive methods. 

The main obstacle is structural. Adam couples momentum and adaptive preconditioning in a highly nonlinear way. This coupling obscures the dissipative mechanism in both discrete and continuous time and makes it difficult to identify a Lyapunov functional from the standard formulation. In this paper, we start from a deterministic continuous-time Adam model and modify its dynamics so that the resulting method admits a Lyapunov analysis and provable convergence. We do not address the additional difficulties caused by stochastic gradients. Instead, we focus on the interaction between adaptivity and acceleration.

The main mathematical contributions are as follows: (i) we identify the mechanism underlying the proposed Adam-type dynamics. Splitting the momentum variable into two components reveals acceleration within the variable-and-operator-splitting (VOS) framework~\cite{chen2025acceleratedgradientmethodsvariable}. The resulting convergence rate is conditional on the trajectory bound, consistency condition, and accepted-step condition stated in the theorems and propositions. (ii) We introduce an explicit Hessian-driven correction term. Discretizing the HNAG flow yields two accelerated Adam-type methods, depending on whether the gradient term is preconditioned by $P_k^{-1}$ or $P_{k+1}^{-1}$. For both methods, we establish a product Lyapunov contraction and an $O(k^{-2})$ rate when the exact adaptive steps are accepted. (iii) We introduce feedback based on the gradient magnitude. Its square-root scaling follows from the cancellation required in the Lyapunov estimate. Although HNAG methods have been studied previously~\cite{chen2019orderoptimizationmethodsbased}, earlier Hessian-driven acceleration analyses do not include this feedback mechanism, which distinguishes Adam from HNAG.

Next, we introduce notation and preliminary results. We use the standard inner product $\langle\cdot,\cdot\rangle$ and norm $\|\cdot\|$. For a symmetric matrix $A$, we define $\langle x,y\rangle_A:=\langle Ax,y\rangle$ and $\|x\|_A^2:=\langle x,x\rangle_A$. We write $A\succ 0$ to denote a symmetric positive definite matrix. For a diagonal matrix $D=\mathrm{diag}\{d_1,\dots,d_d\}$, $D\succ 0$ means $d_i>0$ for all $i$.

For a differentiable function $f$, the Bregman divergence is
\begin{equation}\label{bregman}
	D_f(y,x):=f(y)-f(x)-\langle\nabla f(x),y-x\rangle.
\end{equation}
Let $\mathcal{S}_L$ be the set of convex, $L$-smooth differentiable functions. For any $f\in\mathcal{S}_L$, we have the bound
\begin{equation}\label{bregmanbound}
	\frac{1}{2L}\|\nabla f(x)-\nabla f(y)\|^2 \le D_f(x,y) \le \frac{L}{2}\|x-y\|^2,\qquad \forall x,y\in\mathbb{R}^d.
\end{equation}

The rest of the paper is organized as follows. Section~\ref{sec:flowanalysis} analyzes the continuous-time Adam-HNAG flow. Section~\ref{sec:Adam-HNAG} presents the semi-implicit discrete {\it Adam-HNAG} method and establishes its convergence. Section~\ref{sec:adamhnags} introduces the synchronous variant {\it Adam-HNAG-s} and proves its convergence. Section~\ref{sec:experiments} reports numerical results, and Section~\ref{sec:conclusion} concludes the paper.

%%%%%%%%%%%%%%%%%%%%%%%%%%%%%%%%
\section{Flow and Energy Decay}\label{sec:flowanalysis}
%%%%%%%%%%%%%%%%%%%%%%%%%%%%%%%%

This section introduces the Adam-HNAG flow and proves exponential decay of a nonnegative energy. To simplify the notation, we suppress the time variable throughout.

\subsection{Adam-HNAG flow}
We start from the continuous-time Adam model
\begin{equation}\label{eq:adam_flow_revised}
	\left\{
	\begin{aligned}
		x' &= -\frac{m}{\sqrt{V}},\\
		\tau_1 m' &= -m + \nabla f(x), \\
		\tau_2 V' &= -V + G^2,
	\end{aligned}
	\right.
\end{equation}
where $\tau_1,\tau_2>0$ are time-rescaling constants and $G^2=\mathrm{diag}\big(\nabla f(x).^2\big)$. A suitable discretization of \eqref{eq:adam_flow_revised} leads to the full-batch Adam iteration \eqref{eq:adam}. To expose a tractable Lyapunov structure, we apply a variable-and-operator-splitting (VOS) construction~\cite{chen2025acceleratedgradientmethodsvariable}. Writing $P=\sqrt{V}$ and splitting the momentum as
$$
\frac{m}{\sqrt{V}}=x-y,
$$
we obtain, for $\tau_1=1$ and $\tau_2=\tfrac12$, the approximate split system
\begin{equation}\label{eq:vosadamflow}
	\left\{
	\begin{aligned}
		x' &= y-x,\\
		y' &= -P^{-1}\nabla f(x), \\
		P' &= -P+P^{-1}G^2.
	\end{aligned}
	\right.
\end{equation}
In this derivation, we drop the transport term $P^{-1}P'(x-y)$. Thus, \eqref{eq:vosadamflow} is an Adam-inspired approximate split system, not an equivalent reformulation of \eqref{eq:adam_flow_revised}. We then add a gradient correction inspired by the Hessian-driven Nesterov accelerated gradient (HNAG) flow~\cite{chen2019orderoptimizationmethodsbased}. Hessian-driven damping has been studied in earlier work~\cite{alvarez2002hessian,attouch2020firstorder}; here we use the first-order HNAG representation in~\cite{chen2019orderoptimizationmethodsbased}. The resulting system, called the \emph{Adam-HNAG flow}, is given in \eqref{eq:Adaptive-HNAG}. We show that it admits an energy with exponential decay.

We consider the general Adam-HNAG flow
\begin{equation}\label{eq:Adaptive-HNAG}
	\left\{
	\begin{aligned}
		x' &= y-x-\beta P^{-1}\nabla f(x), \\
		y' &= -P^{-1}\nabla f(x), \\
		P' &= -P+\gamma P^{-1}G^2,
	\end{aligned}
	\right.
\end{equation}
with initial conditions $x(0)=x_0$, $y(0)=y_0$, and $P(0)=P_0$, where $P_0$ is diagonal and positive definite. Here $\beta:[0,\infty)\to(0,\infty)$ is continuously differentiable, $\gamma:[0,\infty)\to[0,\infty)$ is a time-scaling factor, and $G^2=\mathrm{diag}\big(\nabla f(x).^2\big)$. If $P_0$ is diagonal, the $P$-equation preserves the diagonal structure of $P$, since both $P^{-1}$ and $G^2$ are diagonal.

When $P$ is scalar and $\gamma=0$, \eqref{eq:Adaptive-HNAG} has the HNAG form~\cite{chen2019orderoptimizationmethodsbased}. Its relation to Adam follows from \eqref{eq:adam_flow_revised}--\eqref{eq:vosadamflow}: the approximate split system corresponds to $\beta=0$ and $\gamma=1$. The $P$-equation is essential to the Adam-type structure. It introduces feedback control through the gradient magnitude.

\subsection{Energy decay}

%The strong Lyapunov inequality used below has the form~\cite{chen2021unifiedconvergenceanalysisorder}
%\begin{equation}\label{strong-Lyapunov}
%	\frac{\mathrm{d}}{\mathrm{d}t}\mathcal{E}(u(t)) \le -c_L\,\mathcal{E}(u(t)).
%\end{equation}

Assume that \(\arg\min f\neq\varnothing\), fix \(x^\star\in\arg\min f\), and let $z=(x,y)^\top\in\mathbb{R}^{2d}$. We define
\begin{equation}\label{lyapunov}
	\mathcal{E}(z,P):=f(x)-f(x^\star)+\frac{1}{2}\|y-x^\star\|_P^2.
\end{equation}
Because \(P(t)\) may decay and \(f\) need not have a unique minimizer, \(\mathcal E\) need not control the full state uniformly. The result below is therefore an energy-decay statement, not exponential stability of \((x,y,P)\).

\begin{theorem}[Exponential energy decay]\label{theorem1}
	Let $f\in\mathcal{S}_L$ with \(\arg\min f\neq\varnothing\), fix \(x^\star\in\arg\min f\), and let $z(0)=z_0$ and $P(0)=P_0$, where \(P_0\) is diagonal and \(P_0\succ 0\). Define $\mathcal{E}$ by \eqref{lyapunov}. Assume that a solution of \eqref{eq:Adaptive-HNAG} exists with \(P(t)\succ0\) and, for all $t\ge 0$,
\begin{equation}\label{eq:betagamma}
	\gamma(t)\|y(t)-x^\star\|_\infty^2\le 2\beta(t).
\end{equation}
	Then every solution of \eqref{eq:Adaptive-HNAG} satisfies
	\begin{equation}\label{eq:expdecay}
		\mathcal{E}(z(t),P(t)) \le \mathcal{E}(z_0,P_0)e^{-t}.
	\end{equation}
\end{theorem}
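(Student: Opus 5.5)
Differentiate $\mathcal E$ along the trajectory and show $\frac{d}{dt}\mathcal E\le -\mathcal E$; Grönwall's inequality then gives \eqref{eq:expdecay}. Since $P$ is diagonal and evolves in time, the energy derivative has three pieces:
\begin{equation*}
\frac{d}{dt}\mathcal E=\langle \nabla f(x),x'\rangle+\langle P(y-x^\star),y'\rangle+\tfrac12\|y-x^\star\|_{P'}^2 .
\end{equation*}
Substituting \eqref{eq:Adaptive-HNAG}, the first term is $\langle\nabla f(x),y-x\rangle-\beta\|\nabla f(x)\|_{P^{-1}}^2$. The second is $-\langle y-x^\star,\nabla f(x)\rangle$; here the $P$ in the energy exactly cancels the $P^{-1}$ preconditioning of $y'$, which is why the energy is weighted by $P$. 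Adding them leaves $-\langle\nabla f(x),x-x^\star\rangle-\beta\|\nabla f(x)\|^2_{P^{-1}}$. By convexity, $-\langle\nabla f(x),x-x^\star\rangle\le -(f(x)-f(x^\star))$; more precisely it equals $-(f(x)-f(x^\star))-D_f(x^\star,x)$ with $D_f\ge0$ by \eqref{bregman}.

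The third term carries the Adam feedback. With $P'=-P+\gamma P^{-1}G^2$ we get
\begin{equation*}
\tfrac12\|y-x^\star\|_{P'}^2=-\tfrac12\|y-x^\star\|_P^2+\tfrac{\gamma}{2}\sum_{i} \frac{(\partial_i f(x))^2}{p_i}(y_i-x^\star_i)^2 .
\end{equation*}
The first piece supplies exactly the $-\frac12\|y-x^\star\|_P^2$ needed for $-\mathcal E$. For the second, the main step is to bound it coordinatewise by $\|y-x^\star\|_\infty^2$:
\begin{equation*}
\tfrac{\gamma}{2}\sum_i \frac{(\partial_i f)^2}{p_i}(y_i-x_i^\star)^2\le \tfrac{\gamma}{2}\|y-x^\star\|_\infty^2\,\|\nabla f(x)\|_{P^{-1}}^2\le \beta\|\nabla f(x)\|_{P^{-1}}^2,
\end{equation*}
using hypothesis \eqref{eq:betagamma}. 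This is absorbed by the Hessian-driven damping term $-\beta\|\nabla f(x)\|_{P^{-1}}^2$, which is why the correction $\beta$ is needed at all.

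Collecting terms gives $\frac{d}{dt}\mathcal E\le -(f(x)-f(x^\star))-\frac12\|y-x^\star\|_P^2-D_f(x^\star,x)\le-\mathcal E$, and integrating $\frac{d}{dt}(e^t\mathcal E)\le0$ yields the claim. The main obstacle is the $P'$ term. The positivity $P(t)\succ0$ assumed in the statement is what makes $P^{-1}$ and the $P$-norm meaningful, and the diagonal structure of $P$ (preserved by the flow, as noted after \eqref{eq:Adaptive-HNAG}) is what allows the $\ell^\infty$ bound. Since $P$ is diagonal, differentiating $\frac12\|y-x^\star\|_P^2$ introduces no commutator issues. Smoothness ($L$) is not needed beyond well-posedness and differentiability of $f(x(t))$.
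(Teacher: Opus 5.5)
Your proposal is correct and follows the paper's proof essentially step for step. You differentiate $\mathcal E$, let the $P$-weighting cancel the preconditioning in $y'$, apply convexity, bound the $\gamma P^{-1}G^2$ term coordinatewise by $\|y-x^\star\|_\infty^2\|\nabla f(x)\|_{P^{-1}}^2$ and absorb it into $-\beta\|\nabla f(x)\|_{P^{-1}}^2$ via \eqref{eq:betagamma}, then integrate; keeping the Bregman remainder $D_f(x^\star,x)$ is a harmless sharpening that the paper simply drops.
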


\begin{proof}
	Differentiating \eqref{lyapunov} along \eqref{eq:Adaptive-HNAG} yields
	\begin{equation}\label{eq:Edot-start-new}
		\begin{aligned}
			&\quad \frac{\mathrm{d}}{\mathrm{d}t}\mathcal{E}(z(t),P(t))\\
			&= \langle\nabla f(x),x'\rangle + \big\langle P(y-x^\star),y'\big\rangle + \frac{1}{2}\|y-x^\star\|_{P'}^2 \\
			&= \langle\nabla f(x),y-x-\beta P^{-1}\nabla f(x)\rangle
			- \langle P(y-x^\star),P^{-1}\nabla f(x)\rangle\\
			&\quad + \frac{1}{2}\|y-x^\star\|_{-P+\gamma P^{-1}G^2}^2 \\
			&= - \langle\nabla f(x), x - x^\star\rangle
			-\frac{1}{2}\|y-x^\star\|_P^2
			-\beta\|\nabla f(x)\|_{P^{-1}}^2
			+\frac{\gamma}{2}\|y-x^\star\|_{P^{-1}G^2}^2.
		\end{aligned}
	\end{equation}
	By convexity,
	\[
	\langle\nabla f(x),x - x^\star\rangle \ge f(x)-f(x^\star).
	\]
	Substituting into \eqref{eq:Edot-start-new} gives
	\begin{equation}\label{eq:Edot-mid-new}
		\frac{\mathrm{d}}{\mathrm{d}t}\mathcal{E}(z(t),P(t))
		\le -\mathcal{E}(z(t),P(t))
		-\beta\|\nabla f(x)\|_{P^{-1}}^2
		+\frac{\gamma}{2}\|y-x^\star\|_{P^{-1}G^2}^2.
	\end{equation}
	Since $P$ and $G^2$ are diagonal,
	\begin{equation}\label{eq:force_bound}
		\|y-x^\star\|_{P^{-1}G^2}^2
		=\sum_{i=1}^d p_i^{-1}(y_i-x_i^\star)^2(\partial_i f(x))^2
		\le \|y-x^\star\|_\infty^2\,\|\nabla f(x)\|_{P^{-1}}^2.
	\end{equation}
	Combining \eqref{eq:Edot-mid-new}, \eqref{eq:force_bound}, and the assumption \eqref{eq:betagamma} yields
	\[
	\frac{\mathrm{d}}{\mathrm{d}t}\mathcal{E}(z(t),P(t))\le -\mathcal{E}(z(t),P(t)).
	\]
	Integrating proves \eqref{eq:expdecay}.
\end{proof}

The correction term in the $x$-equation introduces the negative contribution $\|\nabla f(x)\|_{P^{-1}}^2 = {\rm Trace}(P^{-1}G^2)$. At the same time, the adaptive metric generates the positive term $\|y-x^\star\|_{P^{-1}G^2}^2$. Since both $P$ and $G^2$ are diagonal, these terms have the same structure. Choosing $\beta$ and $\gamma$ in a compatible way makes the positive term controlled by the negative one. This cancellation is the key to the exponential energy estimate.

%%%%%%%%%%%%%%%%%%%%%%%%%%%%%%%%
\section{Adam-HNAG: Scheme and Convergence Analysis}\label{sec:Adam-HNAG}
%%%%%%%%%%%%%%%%%%%%%%%%%%%%%%%%

This section introduces a discretization of the Adam-HNAG flow and studies its convergence. The Lyapunov analysis yields a discrete contraction estimate and a corresponding practical parameter choice.

\subsection{Scheme}

Consider the following implicit--explicit (IMEX) discretization of \eqref{eq:Adaptive-HNAG}:
\begin{subequations}\label{eq:AdamHNAG}
	\begin{align}
		\frac{x_{k+1}-x_k}{\alpha_k} &= y_k-x_{k+1}-\beta_k P_{k-1}^{-1}\nabla f(x_k), \label{AHNAG:a}\\
		\frac{y_{k+1}-y_k}{\alpha_k} &= -P_k^{-1}\nabla f(x_{k+1}), \label{AHNAG:b}\\
		\frac{P_{k+1}-P_k}{\alpha_k} &= -P_{k+1}+\gamma_k P_k^{-1}G^2_{k+1}, \label{AHNAG:c}
	\end{align}
\end{subequations}
with initial values $x_0$, $y_0$, and diagonal $P_0\succ 0$.
For the lagged metric at \(k=0\), set \(P_{-1}:=P_0\).
Here $\alpha_k>0$ is the step size, $\beta_k,\gamma_k>0$ are parameters, and
$G_{k+1}^2=\mathrm{diag}\big(\nabla f(x_{k+1}).^2\big).$ The parameters $(\alpha_k,\beta_k,\gamma_k)$ are chosen to guarantee convergence. We combine $\alpha_k$ and $\beta_k$ into the single parameter $\eta_k:=\alpha_k\beta_k$.

When $P_k$ is scalar and $\gamma_k=0$, the method reduces to HNAG; see~\cite{chen2021unifiedconvergenceanalysisorder} for the deterministic setting and~\cite{yu2026shangrobuststochasticacceleration} for the stochastic setting. Thus, \eqref{eq:AdamHNAG} can be viewed as an Adam-style extension of HNAG. The feedback through $G_{k+1}^2$ drives the adaptive preconditioner and can tighten the product Lyapunov bound.

Algorithm~\ref{alg:adam-hnag} summarizes the implementable method. It replaces $P_k^{-1}$ by $(P_k+\varepsilon I)^{-1}$, where $\varepsilon\ge0$ is a stabilization constant, and sets $\gamma_k=\alpha_k/R^2$. Here $R>0$ controls the preconditioner update: a larger $R$ gives a smaller $\gamma_k$.

In Theorem~\ref{thm:lyapunov-contraction}, $R$ is an a priori bound on $\|y_k-x^\star\|_\infty$, and $\varepsilon=0$. For $\varepsilon>0$, one may use the Lyapunov metric $P_k+\varepsilon I$. The same argument applies, except that the metric-variation estimate contains
$$
-\frac{\alpha_k}{2}\|y_{k+1}-x^\star\|_{P_{k+1}}^2
=
-\frac{\alpha_k}{2}\|y_{k+1}-x^\star\|_{P_{k+1}+\varepsilon I}^2
+\frac{\alpha_k\varepsilon}{2}\|y_{k+1}-x^\star\|^2.
$$
The trajectory bound gives $\|y_{k+1}-x^\star\|^2\le dR^2$, so the additional one-step term is at most $\alpha_k\varepsilon dR^2/2$. Thus, stabilization introduces an explicit additive perturbation and, for controlled step sizes, an error level of order $\varepsilon dR^2$.
\subsection{An auxiliary estimate for preconditioned gradient descent}

For {\it Adam-HNAG}, we use the lagged preconditioner and define one preconditioned gradient step
\begin{equation}\label{eq:xkplus-lagged}
	x_k^+:=x_k-\eta_kP_{k-1}^{-1}\nabla f(x_k),\qquad k\ge0,
\end{equation}
with the convention $P_{-1}=P_0$. Algorithm~\ref{alg:adam-hnag} returns $x_k^+$ because the convergence rate is proved for $x_k^+$ rather than $x_k$.

 %\LC{The algorithm freezes $\eta_k$ once $x_k$ is accepted and backtracks only $\alpha_k$. Changing $\eta_k$ afterward would change $x_k^+$ retroactively and break the telescoping Lyapunov argument.}

\begingroup
\renewcommand{\baselinestretch}{1.2}\selectfont
\begin{algorithm}[H]
	\caption{Adam-HNAG}
	\label{alg:adam-hnag}
	\begin{algorithmic}[1]
		\REQUIRE Initial data \(x_0,y_0\in\mathbb{R}^d\), diagonal \(P_0\succ0\), \(L\), \(R>0\), \(T\ge1\), \(\mathrm{InnerLoop}\in\{\mathrm{True},\mathrm{False}\}\), and \(\varepsilon\ge0\)
		\STATE Set \(P_{-1}=P_0\), \(g_0=\nabla f(x_0)\); if \(g_0=0\), return \(x_0\)
		\STATE \(\eta_0=\dfrac{1}{L}\dfrac{\|g_0\|_{(P_{-1}+\varepsilon I)^{-1}}^2}{\|g_0\|_{(P_{-1}+\varepsilon I)^{-2}}^2}\)
		\FOR{\(k=0,1,\ldots,T-1\)}
		\STATE Set \(\alpha_k=\sqrt{\eta_k/2}\) and \(\gamma_k=\alpha_k/R^2\)
		\STATE Set \(\mathrm{Accept}=\mathrm{False}\)
		\WHILE{\(\mathrm{Accept}=\mathrm{False}\)}
		\STATE \(x_{k+1}^{\rm tr}=\dfrac{1}{1+\alpha_k}x_k+\dfrac{\alpha_k}{1+\alpha_k}y_k-\dfrac{\eta_k}{1+\alpha_k}(P_{k-1}+\varepsilon I)^{-1}g_k\)
		\STATE Set \(g_{k+1}^{\rm tr}=\nabla f(x_{k+1}^{\rm tr})\); if \(g_{k+1}^{\rm tr}=0\), return \(x_{k+1}^{\rm tr}\)
		\STATE \(y_{k+1}^{\rm tr}=y_k-\alpha_k(P_k+\varepsilon I)^{-1}g_{k+1}^{\rm tr}\)
		\STATE \((G_{k+1}^{2})^{\rm tr}=\operatorname{diag}((g_{k+1}^{\rm tr}).^2)\)
		\STATE \(P_{k+1}^{\rm tr}=\dfrac{1}{1+\alpha_k}P_k+\dfrac{\alpha_k\gamma_k}{1+\alpha_k}(P_k+\varepsilon I)^{-1}(G_{k+1}^{2})^{\rm tr}\)
		\STATE \(\eta_{k+1}^{\rm tr}=\dfrac{1}{L}\dfrac{\|g_{k+1}^{\rm tr}\|_{(P_k+\varepsilon I)^{-1}}^2}{\|g_{k+1}^{\rm tr}\|_{(P_k+\varepsilon I)^{-2}}^2}\)
		\IF{\(\mathrm{InnerLoop}=\mathrm{False}\) or \(2\alpha_k^2\le \eta_{k+1}^{\rm tr}(1+\alpha_k)\)}
		\STATE Set \(x_{k+1}=x_{k+1}^{\rm tr}\), \(y_{k+1}=y_{k+1}^{\rm tr}\), \(P_{k+1}=P_{k+1}^{\rm tr}\), \(g_{k+1}=g_{k+1}^{\rm tr}\), and \(\eta_{k+1}=\eta_{k+1}^{\rm tr}\)
		\STATE Set \(\mathrm{Accept}=\mathrm{True}\)
		\ELSE
		\STATE Set \(\alpha_k=\sqrt{\eta_{k+1}^{\rm tr}/2}\) and \(\gamma_k=\alpha_k/R^2\)
		\ENDIF
		\ENDWHILE
		\ENDFOR
		\STATE \(x_T^+=x_T-\eta_T(P_{T-1}+\varepsilon I)^{-1}g_T\)
		\RETURN the postprocessed point \(x_T^+\)
	\end{algorithmic}
\end{algorithm}
\endgroup

With this notation, the scheme becomes a sequence of preconditioned gradient updates. Given $(x_k,x_k^+,y_k)$ and $P_k$, compute
\begin{subequations}\label{eq:AdamHNAGplus}
	\begin{align}
		\frac{x_{k+1}-x_k^+}{\alpha_k} &= y_k-x_{k+1}, \label{AHNAG2:a}\\
		\frac{y_{k+1}-y_k}{\alpha_k} &= -P_k^{-1}\nabla f(x_{k+1}), \label{AHNAG2:b}\\
		x_{k+1}^+ &= x_{k+1}-\eta_{k+1}P_k^{-1}\nabla f(x_{k+1}), \label{AHNAG2:c}\\
		\frac{P_{k+1}-P_k}{\alpha_k} &= -P_{k+1}+\gamma_kP_k^{-1}G_{k+1}^2. \label{AHNAG2:d}
	\end{align}
\end{subequations}

We first establish a sufficient-decrease estimate for one preconditioned gradient step.

\begin{lemma}[Directional descent in $\|\cdot\|_{P^{-1}}$]\label{lemma1-euclid-Bform}
	Assume that $f$ is $L$-smooth in the Euclidean norm and that $P^{-1}\succ0$ is symmetric. Suppose also that $\nabla f(x)\neq0$. Define $x^+$ by
\begin{equation}\label{auxi1}
	x^+ := x-\eta P^{-1}\nabla f(x),
\end{equation}
	 and set
	\begin{equation}\label{eq:etabar}
		\bar\eta(P^{-1},\nabla f(x))
		:=
		\frac{1}{L}\frac{\|\nabla f(x)\|_{P^{-1}}^2}{\|\nabla f(x)\|_{P^{-2}}^2}.
	\end{equation}
	If $0<\eta\le\bar\eta(P^{-1},\nabla f(x))$, then
	$$
	f(x^+)\le f(x)-\frac{\eta}{2}\|\nabla f(x)\|_{P^{-1}}^2.
	$$
\end{lemma}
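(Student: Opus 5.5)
The plan is to apply the standard descent lemma for $L$-smooth functions along the direction $d:=-P^{-1}\nabla f(x)$. Then we use the definition of $\bar\eta$ to absorb the quadratic term into half of the linear decrease.

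\textbf{Step 1: the descent inequality.} From the upper bound in \eqref{bregmanbound}, $D_f(x^+,x)\le \frac L2\|x^+-x\|^2$, that is,
\[
f(x^+)\le f(x)+\langle \nabla f(x),x^+-x\rangle+\frac L2\|x^+-x\|^2 .
\]
Only $L$-smoothness is needed for this upper bound, not convexity. The standard integral form $f(y)-f(x)-\langle\nabla f(x),y-x\rangle=\int_0^1\langle\nabla f(x+t(y-x))-\nabla f(x),y-x\rangle\,dt$ together with Cauchy--Schwarz gives it directly.

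\textbf{Step 2: substitute $x^+-x=-\eta P^{-1}\nabla f(x)$.} Write $g=\nabla f(x)$. Since $P^{-1}$ is symmetric,
\[
\langle g,x^+-x\rangle=-\eta\|g\|_{P^{-1}}^2,
\qquad
\|x^+-x\|^2=\eta^2\langle P^{-1}g,P^{-1}g\rangle=\eta^2\|g\|_{P^{-2}}^2 .
\]
This yields
\[
f(x^+)\le f(x)-\eta\|g\|_{P^{-1}}^2+\frac{L\eta^2}{2}\|g\|_{P^{-2}}^2 .
\]

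\textbf{Step 3: absorb the quadratic term.} Since $g\neq0$ and $P^{-1}\succ0$, we have $\|g\|_{P^{-2}}^2>0$, so $\bar\eta$ in \eqref{eq:etabar} is well defined and positive. The assumption $\eta\le\bar\eta$ is equivalent to $L\eta\|g\|_{P^{-2}}^2\le\|g\|_{P^{-1}}^2$. Hence
\[
\frac{L\eta^2}{2}\|g\|_{P^{-2}}^2\le\frac{\eta}{2}\|g\|_{P^{-1}}^2 ,
\]
and substituting this into Step 2 gives the claim.

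No step poses a real obstacle. The only point requiring care is the identification $\|P^{-1}g\|^2=\|g\|_{P^{-2}}^2$, which uses the symmetry of $P^{-1}$. The nonvanishing gradient is used only to keep $\bar\eta$ well defined.
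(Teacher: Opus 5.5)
Your proof is correct and follows the paper's argument: apply the $L$-smoothness descent inequality, substitute $x^+-x=-\eta P^{-1}\nabla f(x)$ to obtain the $\|\cdot\|_{P^{-1}}$ and $\|\cdot\|_{P^{-2}}$ terms, and use $\eta\le\bar\eta$ to absorb the quadratic term into half of the linear decrease. You also correctly note that the upper bound in \eqref{bregmanbound} needs only $L$-smoothness and not convexity. This matters because the lemma does not assume convexity.
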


\begin{proof}
	By $L$-smoothness,
	$$
	f(x^+)\le f(x)+\langle\nabla f(x),x^+-x\rangle+\frac{L}{2}\|x^+-x\|^2.
	$$
	Since $x^+-x=-\eta P^{-1}\nabla f(x)$,
	$$
	\langle\nabla f(x),x^+-x\rangle=-\eta\|\nabla f(x)\|_{P^{-1}}^2,
	\qquad
	\|x^+-x\|^2=\eta^2\|\nabla f(x)\|_{P^{-2}}^2.
	$$
	Therefore,
	$$
	f(x^+)\le f(x)-\eta\left(1-\frac{L\eta}{2}\frac{\|\nabla f(x)\|_{P^{-2}}^2}{\|\nabla f(x)\|_{P^{-1}}^2}\right)\|\nabla f(x)\|_{P^{-1}}^2.
	$$
	The step-size condition makes the factor in parentheses at least $1/2$, which proves the result.
\end{proof}

If $\nabla f(x)=0$, then $\bar\eta(P^{-1},\nabla f(x))$ is not used. In this case, $x^+=x$, the descent estimate holds with equality for every $\eta>0$, and all gradient-dependent terms in the subsequent Lyapunov estimates vanish.

\subsection{A discrete strong Lyapunov inequality}

Define
\begin{equation}\label{lyapunov2}
	\mathcal{E}(z_k^+,P_k):=f(x_k^+)-f(x^\star)+\frac{1}{2}\|y_k-x^\star\|_{P_k}^2,
\end{equation}
where $z_k^+=(x_k^+,y_k)^\top$. We decompose the energy difference as
\begin{equation}\label{eq:decomp}
	\mathcal{E}(z_{k+1}^+,P_{k+1})-\mathcal{E}(z_k^+,P_k)=\mathbb{I}_1+\mathbb{I}_2+\mathbb{I}_3,
\end{equation}
where
$$
	\begin{aligned}
		\mathbb{I}_1 &:= \mathcal{E}(z_{k+1}^+,P_{k+1})-\mathcal{E}(z_{k+1},P_{k+1}),\\
		\mathbb{I}_2 &:= \mathcal{E}(z_{k+1},P_{k+1})-\mathcal{E}(z_{k+1},P_k),\\
		\mathbb{I}_3 &:= \mathcal{E}(z_{k+1},P_k)-\mathcal{E}(z_k^+,P_k),
	\end{aligned}
$$
and $z_k=(x_k,y_k)^\top$. Here $\mathbb{I}_1$ measures the decrease from the preconditioned gradient step, $\mathbb{I}_2$ measures the change in the metric, and $\mathbb{I}_3$ measures the change from the HNAG update. Estimating $\mathbb{I}_3$ is the main technical step.

If $\nabla f(x_{k+1})=0$, the estimate below holds by the zero-gradient convention. Otherwise, Lemma~\ref{lemma1-euclid-Bform}, applied at $x_{k+1}$ under the condition
$$
0<\eta_{k+1}\le\bar\eta(P_k^{-1},\nabla f(x_{k+1})),
$$
gives
\begin{equation}\label{equ1}
	\mathbb{I}_1=f(x_{k+1}^+)-f(x_{k+1})\le-\frac{\eta_{k+1}}{2}\|\nabla f(x_{k+1})\|_{P_k^{-1}}^2.
\end{equation}
Using \eqref{AHNAG2:d}, we obtain
\begin{equation}\label{equ2}
	\mathbb{I}_2
	=
	\frac{1}{2}\|y_{k+1}-x^\star\|_{P_{k+1}-P_k}^2
	=
	-\frac{\alpha_k}{2}\|y_{k+1}-x^\star\|_{P_{k+1}}^2
	+\frac{\alpha_k\gamma_k}{2}\|y_{k+1}-x^\star\|_{P_k^{-1}G^2_{k+1}}^2.
\end{equation}

\begin{lemma}\label{lem:I3}
	Let $f\in\mathcal{S}_L$. Given $x_0$, $y_0$, and a diagonal matrix $P_0\succ0$, generate $(x_k,y_k,P_k)$ by \eqref{eq:AdamHNAG} and define $x_k^+$ by \eqref{eq:xkplus-lagged}. Then
	$$
	\mathbb{I}_3
	\le
	-\alpha_k\big(f(x_{k+1})-f(x^\star)\big)
	+\frac{\alpha_k^2}{2}\|\nabla f(x_{k+1})\|_{P_k^{-1}}^2.
	$$
\end{lemma}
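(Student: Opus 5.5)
The plan is a direct one-step computation: expand the quadratic term in $\mathbb{I}_3$ at the fixed metric $P_k$, substitute the updates \eqref{AHNAG2:a}--\eqref{AHNAG2:b}, and close with two applications of convexity. Only convexity of $f$ is used; $L$-smoothness and the bound from Lemma~\ref{lemma1-euclid-Bform} play no role here.

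\emph{Step 1 (reformulation).} First I will confirm that \eqref{eq:AdamHNAG} is equivalent to \eqref{eq:AdamHNAGplus} with $\eta_k=\alpha_k\beta_k$. Multiplying \eqref{AHNAG:a} by $\alpha_k$ and rearranging gives
\[
(1+\alpha_k)x_{k+1}=x_k-\eta_kP_{k-1}^{-1}\nabla f(x_k)+\alpha_k y_k=x_k^++\alpha_k y_k .
\]
This is exactly \eqref{AHNAG2:a}. Equivalently,
\[
y_k-x^\star=(x_{k+1}-x^\star)+\tfrac{1}{\alpha_k}(x_{k+1}-x_k^+).
\]

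\emph{Step 2 (expanding the metric term).} Write $g:=\nabla f(x_{k+1})$. Since the metric is frozen at $P_k$ in $\mathbb{I}_3$, we have
\[
\mathbb{I}_3=f(x_{k+1})-f(x_k^+)+\tfrac12\|y_{k+1}-x^\star\|_{P_k}^2-\tfrac12\|y_k-x^\star\|_{P_k}^2 .
\]
I will expand the quadratic difference around $y_k$:
\[
\tfrac12\|y_{k+1}-x^\star\|_{P_k}^2-\tfrac12\|y_k-x^\star\|_{P_k}^2=\langle y_{k+1}-y_k,\,y_k-x^\star\rangle_{P_k}+\tfrac12\|y_{k+1}-y_k\|_{P_k}^2 .
\]
By \eqref{AHNAG2:b}, $y_{k+1}-y_k=-\alpha_kP_k^{-1}g$. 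Substituting, the right-hand side becomes
\[
-\alpha_k\langle g,\,y_k-x^\star\rangle+\tfrac{\alpha_k^2}{2}\|g\|_{P_k^{-1}}^2 .
\]
The second piece is already the positive term in the claim.

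\emph{Step 3 (convexity).} Inserting the expression for $y_k-x^\star$ from Step 1 gives
\[
-\alpha_k\langle g,\,y_k-x^\star\rangle=-\alpha_k\langle g,\,x_{k+1}-x^\star\rangle-\langle g,\,x_{k+1}-x_k^+\rangle .
\]
I then apply convexity at $x_{k+1}$ twice:
\[
\langle g,\,x_{k+1}-x^\star\rangle\ge f(x_{k+1})-f(x^\star),\qquad \langle g,\,x_{k+1}-x_k^+\rangle\ge f(x_{k+1})-f(x_k^+).
\]
The second inequality produces $f(x_k^+)-f(x_{k+1})$, which cancels exactly the term $f(x_{k+1})-f(x_k^+)$ in $\mathbb{I}_3$. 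What remains is
\[
\mathbb{I}_3\le-\alpha_k\bigl(f(x_{k+1})-f(x^\star)\bigr)+\tfrac{\alpha_k^2}{2}\|g\|_{P_k^{-1}}^2,
\]
which is the claim.

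The only point needing care is Step 1: checking that the lagged preconditioner $P_{k-1}$ in \eqref{AHNAG:a} and the definition \eqref{eq:xkplus-lagged} of $x_k^+$ match. This is what makes the function-value terms telescope exactly. Everything else is a direct identity.
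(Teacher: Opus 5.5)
Your proof is correct and is essentially the paper's argument. The paper writes $\mathbb{I}_3=\langle\nabla_z\mathcal{E}(z_{k+1},P_k),z_{k+1}-z_k^+\rangle-D_{\mathcal{E}}(z_k^+,z_{k+1};P_k)$, applies convexity to $\langle\nabla f(x_{k+1}),x_{k+1}-x^\star\rangle$, drops the nonnegative $f$-part of the Bregman divergence (your second convexity inequality), and uses Young's inequality on the $y$-terms; since $y_k-y_{k+1}=\alpha_kP_k^{-1}\nabla f(x_{k+1})$, that Young step is an equality, so your direct expansion around $y_k$ gives the same bound from the same two convexity inequalities.
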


\begin{proof}
	Using \eqref{eq:AdamHNAG}, \eqref{eq:AdamHNAGplus}, and the definition of $\mathcal{E}$, we obtain
	\begin{equation}\label{equ4}
		\begin{aligned}
			\mathcal{E}(z_{k+1},P_k)-\mathcal{E}(z_k^+,P_k)
			&=
			\langle\nabla_z\mathcal{E}(z_{k+1},P_k),z_{k+1}-z_k^+\rangle
			-D_{\mathcal{E}}(z_k^+,z_{k+1};P_k) \\
			&=
			\alpha_k\langle\nabla f(x_{k+1}),y_k-x_{k+1}\rangle
			-D_{\mathcal{E}}(z_k^+,z_{k+1};P_k) \\
			&\quad
			+\alpha_k\langle P_k(y_{k+1}-x^\star),-P_k^{-1}\nabla f(x_{k+1})\rangle \\
			&=
			-\alpha_k\langle\nabla f(x_{k+1}),x_{k+1}-x^\star\rangle \\
			&\quad
			+\alpha_k\langle\nabla f(x_{k+1}),y_k-y_{k+1}\rangle
			-D_{\mathcal{E}}(z_k^+,z_{k+1};P_k).
		\end{aligned}
	\end{equation}
	Convexity gives
	\begin{equation}\label{equ5}
		-\langle\nabla f(x_{k+1}),x_{k+1}-x^\star\rangle
		\le
		-\big(f(x_{k+1})-f(x^\star)\big).
	\end{equation}
	Moreover,
	\begin{equation}\label{equ8}
		-D_{\mathcal{E}}(z_k^+,z_{k+1};P_k)
		\le
		-\frac{1}{2}\|y_k-y_{k+1}\|_{P_k}^2.
	\end{equation}
	Young's inequality in the $P_k$-metric yields
	\begin{equation}\label{equ6}
		\alpha_k\langle\nabla f(x_{k+1}),y_k-y_{k+1}\rangle
		\le
		\frac{\alpha_k^2}{2}\|\nabla f(x_{k+1})\|_{P_k^{-1}}^2
		+\frac{1}{2}\|y_k-y_{k+1}\|_{P_k}^2.
	\end{equation}
	Substituting \eqref{equ5}, \eqref{equ6}, and \eqref{equ8} into \eqref{equ4} proves the result.
\end{proof}

Combining the bounds on $\mathbb{I}_1$, $\mathbb{I}_2$, and $\mathbb{I}_3$, we obtain the following discrete Lyapunov inequality.

\begin{lemma}\label{lemma3}
	Let $f\in\mathcal{S}_L$. Starting from $x_0$, $y_0$, and diagonal $P_0\succ0$, generate $(x_k,y_k,P_k)$ by \eqref{eq:AdamHNAG} and define $x_k^+$ by \eqref{eq:xkplus-lagged}. %Denoted by $L_{k+1} = 1/\eta_{k+1}$. 
	Assume
	\[
	\sup_{k\ge0}\|y_k-x^\star\|_\infty\le R.
	\]
  If \(\nabla f(x_{k+1})\neq0\), assume in addition that
	\[
	0<\eta_{k+1} \le \bar\eta(P_k^{-1},\nabla f(x_{k+1})).
	\]
	Then
	\begin{equation}\label{eq:strong-lyap-discrete}
		\mathcal{E}(z_{k+1}^+,P_{k+1})-\mathcal{E}(z_k^+,P_k)
		\le
		-\alpha_k \mathcal{E}(z_{k+1}^+,P_{k+1})
		+\frac{1}{2}\|\nabla f(x_{k+1})\|_{M_{k+1}}^2,
	\end{equation}
	where
	\[
	M_{k+1}
	:=
	\big(\alpha_k^2+\alpha_k\gamma_k R^2-\eta_{k+1}(1+\alpha_k)\big)P_k^{-1}.
	\]
\end{lemma}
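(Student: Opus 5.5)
The plan is to sum the three bounds already obtained for the decomposition \eqref{eq:decomp}: \eqref{equ1} for $\mathbb{I}_1$, the identity \eqref{equ2} for $\mathbb{I}_2$, and Lemma~\ref{lem:I3} for $\mathbb{I}_3$. The resulting right-hand side will then be rearranged into $-\alpha_k\mathcal{E}(z_{k+1}^+,P_{k+1})$ plus a single quadratic form in $g:=\nabla f(x_{k+1})$. Throughout I abbreviate $\|g\|^2:=\|g\|_{P_k^{-1}}^2$.

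First I dispose of the degenerate case $g=0$. Then $x_{k+1}^+=x_{k+1}$, so $\mathbb{I}_1=0$. Also $G_{k+1}^2=0$, so the metric-variation term in \eqref{equ2} vanishes. Lemma~\ref{lem:I3} gives $\mathbb{I}_3\le-\alpha_k(f(x_{k+1})-f(x^\star))$. Summing these yields \eqref{eq:strong-lyap-discrete} with a zero remainder, because $f(x_{k+1})=f(x_{k+1}^+)$.

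For $g\neq0$, adding \eqref{equ1}, \eqref{equ2} and Lemma~\ref{lem:I3} gives
\[
\mathcal{E}(z_{k+1}^+,P_{k+1})-\mathcal{E}(z_k^+,P_k)\le -\alpha_k\big(f(x_{k+1})-f(x^\star)\big)-\frac{\alpha_k}{2}\|y_{k+1}-x^\star\|_{P_{k+1}}^2+\frac{\alpha_k^2-\eta_{k+1}}{2}\|g\|^2+\frac{\alpha_k\gamma_k}{2}\|y_{k+1}-x^\star\|_{P_k^{-1}G_{k+1}^2}^2 .
\]
Two further estimates are needed.

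\begin{enumerate}
\item The last term is handled exactly as in \eqref{eq:force_bound}. Since $P_k$ and $G_{k+1}^2$ are diagonal, we have $\|y_{k+1}-x^\star\|_{P_k^{-1}G_{k+1}^2}^2\le\|y_{k+1}-x^\star\|_\infty^2\|g\|^2$. The trajectory assumption, applied at index $k+1$, then bounds this by $R^2\|g\|^2$.
\item The term $-\alpha_k(f(x_{k+1})-f(x^\star))$ is converted into one involving $f(x_{k+1}^+)$. Lemma~\ref{lemma1-euclid-Bform}, used under the step-size hypothesis $\eta_{k+1}\le\bar\eta(P_k^{-1},g)$, gives $f(x_{k+1})\ge f(x_{k+1}^+)+\frac{\eta_{k+1}}{2}\|g\|^2$. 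Hence
\[
-\alpha_k\big(f(x_{k+1})-f(x^\star)\big)\le-\alpha_k\big(f(x_{k+1}^+)-f(x^\star)\big)-\frac{\alpha_k\eta_{k+1}}{2}\|g\|^2 .
\]
\end{enumerate}

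The term $-\alpha_k(f(x_{k+1}^+)-f(x^\star))$ combines with $-\frac{\alpha_k}{2}\|y_{k+1}-x^\star\|_{P_{k+1}}^2$ to give exactly $-\alpha_k\mathcal{E}(z_{k+1}^+,P_{k+1})$. Collecting the coefficients of $\|g\|^2$ gives
\[
\tfrac12\big(\alpha_k^2+\alpha_k\gamma_kR^2-\eta_{k+1}-\alpha_k\eta_{k+1}\big)=\tfrac12\big(\alpha_k^2+\alpha_k\gamma_kR^2-\eta_{k+1}(1+\alpha_k)\big),
\]
which is precisely $\tfrac12\|g\|_{M_{k+1}}^2$.

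The only genuinely delicate point is the second estimate above. One must reuse the descent lemma a second time, now weighted by $\alpha_k$, so that the function-value part of the energy is evaluated at $x_{k+1}^+$ rather than $x_{k+1}$; this reuse is what produces the factor $(1+\alpha_k)$ in front of $\eta_{k+1}$. Everything else is bookkeeping.
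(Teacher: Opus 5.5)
Your proposal is correct and follows the paper's proof essentially step by step. You sum the bounds for $\mathbb{I}_1,\mathbb{I}_2,\mathbb{I}_3$, control the $G_{k+1}^2$ forcing term by the $\ell^\infty$ trajectory bound at index $k+1$, and reuse Lemma~\ref{lemma1-euclid-Bform} with weight $\alpha_k$ to pass from $\mathcal{E}(z_{k+1},P_{k+1})$ to $\mathcal{E}(z_{k+1}^+,P_{k+1})$, which produces the factor $(1+\alpha_k)$. One small wording point concerns your degenerate case $\nabla f(x_{k+1})=0$: only the $G_{k+1}^2$ part of $\mathbb{I}_2$ vanishes, and the remaining term $-\frac{\alpha_k}{2}\|y_{k+1}-x^\star\|_{P_{k+1}}^2$ is exactly what you need to form $-\alpha_k\mathcal{E}(z_{k+1}^+,P_{k+1})$.
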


\begin{proof}
	Summing the bounds for $\mathbb{I}_1$, $\mathbb{I}_2$, and $\mathbb{I}_3$ gives
	\[
	\begin{aligned}
		\mathcal{E}(z_{k+1}^+,P_{k+1})-\mathcal{E}(z_k^+,P_k)
		&\le
		-\alpha_k\mathcal{E}(z_{k+1},P_{k+1})
		+\frac{\alpha_k^2-\eta_{k+1}}{2}\|\nabla f(x_{k+1})\|_{P_k^{-1}}^2 \\
		&\quad
		+\frac{\alpha_k\gamma_k}{2}\|y_{k+1}-x^\star\|_{P_k^{-1}G^2_{k+1}}^2.
	\end{aligned}
	\]
	Using \(\|y_{k+1}-x^\star\|_\infty\le R\) and \(G^2_{k+1}=\mathrm{diag}(\nabla f(x_{k+1}).^2)\),
	\[
	\frac{\alpha_k\gamma_k}{2}\|y_{k+1}-x^\star\|_{P_k^{-1}G^2_{k+1}}^2
	\le
	\frac{\alpha_k\gamma_kR^2}{2}\|\nabla f(x_{k+1})\|_{P_k^{-1}}^2.
	\]
	Next, if \(\nabla f(x_{k+1})=0\), the following inequality is an equality; otherwise it follows from Lemma~\ref{lemma1-euclid-Bform}:
	\[
	-\alpha_k\mathcal{E}(z_{k+1},P_{k+1})
	\le
	-\alpha_k\mathcal{E}(z_{k+1}^+,P_{k+1})
	-\frac{\alpha_k\eta_{k+1}}{2}\|\nabla f(x_{k+1})\|_{P_k^{-1}}^2.
	\]
	Substituting these bounds yields \eqref{eq:strong-lyap-discrete}.
\end{proof}

\subsection{Parameter choice and Lyapunov contraction}\label{sec:alphachoice}

Lemma~\ref{lemma3} reduces the one-step decay to a single quadratic correction term. Recall that
\[
M_{k+1}
=
\big(\alpha_k^2+\alpha_k\gamma_k R^2-\eta_{k+1}(1+\alpha_k)\big)P_k^{-1},
\qquad
\eta_{k+1}:=\alpha_{k+1}\beta_{k+1}.
\]
A sufficient condition for cancellation is therefore
\begin{equation}\label{eq:Mnegative}
	M_{k+1}\preceq 0
	\qquad\Longleftrightarrow\qquad
	\alpha_k^2+\alpha_k\gamma_k R^2-\eta_{k+1}(1+\alpha_k)\le 0.
\end{equation}

We first state the convergence result under an assumption, and then discuss how to enforce it in practice.

\begin{theorem}[Lyapunov contraction]\label{thm:lyapunov-contraction}
	Let \(f\in\mathcal S_L\) with \(\arg\min f\neq\varnothing\), and fix \(x^\star\in\arg\min f\). Starting from \(x_0,y_0\), and diagonal \(P_0\succ0\), set \(P_{-1}=P_0\) and generate \((x_k,y_k,P_k)\) by the exact scheme \eqref{eq:AdamHNAG}. Define \(x_k^+\) by \eqref{eq:xkplus-lagged} and \(\mathcal E\) by \eqref{lyapunov2}. Assume that there exists \(R>0\) such that
	\begin{equation}\label{eq:Ybound}
		\sup_{k\ge0}\|y_k-x^\star\|_\infty\le R.
	\end{equation}
	Define $\eta_k = \alpha_k\beta_k$. For each nonstationary iterate, choose
	\[
	\eta_k=\bar\eta(P_{k-1}^{-1},\nabla f(x_k)),
	\qquad
	0<\alpha_k\le\sqrt{\eta_k/2},
	\qquad
	\gamma_k=\alpha_k/R^2.
	\]
	Assume further that, for all \(k\ge0\),
	\begin{equation}\label{eq:alpha-consistency}
		2\alpha_k^2\le \eta_{k+1}(1+\alpha_k).
	\end{equation}
	Then
	\begin{equation}\label{eq:Ek-contraction}
		\mathcal{E}(z_{k+1}^+,P_{k+1})
		\le
		\mathcal{E}(z_0^+,P_0)\prod_{j=0}^{k}\frac{1}{1+\alpha_j},
		\qquad \forall k\ge0.
	\end{equation}
\end{theorem}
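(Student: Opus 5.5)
The plan is to apply Lemma~\ref{lemma3} at every step, show that the correction term $\frac12\|\nabla f(x_{k+1})\|_{M_{k+1}}^2$ is nonpositive, and then unroll the resulting one-step contraction.

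\emph{Checking the hypotheses of Lemma~\ref{lemma3}.} The trajectory bound is exactly \eqref{eq:Ybound}. The step-size condition $0<\eta_{k+1}\le\bar\eta(P_k^{-1},\nabla f(x_{k+1}))$ holds with equality by the choice $\eta_{k+1}=\bar\eta(P_k^{-1},\nabla f(x_{k+1}))$ whenever $\nabla f(x_{k+1})\neq0$. If instead $\nabla f(x_{k+1})=0$, Lemma~\ref{lemma3} needs no step-size condition, and every gradient term vanishes. In that case $\|\nabla f(x_{k+1})\|_{M_{k+1}}^2=0$ whatever the sign of the scalar factor, so that case is settled directly.

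\emph{Sign of $M_{k+1}$.} By \eqref{eq:Mnegative} it suffices to verify the scalar inequality
\[
\alpha_k^2+\alpha_k\gamma_kR^2-\eta_{k+1}(1+\alpha_k)\le0 .
\]
With $\gamma_k=\alpha_k/R^2$ we get $\alpha_k\gamma_kR^2=\alpha_k^2$. The left side therefore equals $2\alpha_k^2-\eta_{k+1}(1+\alpha_k)$, which is $\le0$ by the consistency assumption \eqref{eq:alpha-consistency}. Since $P_k^{-1}\succ0$ (diagonal with positive entries), this gives $M_{k+1}\preceq0$.

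\emph{Unrolling the contraction.} Inequality \eqref{eq:strong-lyap-discrete} then reduces to $(1+\alpha_k)\mathcal E(z_{k+1}^+,P_{k+1})\le\mathcal E(z_k^+,P_k)$. Dividing by $1+\alpha_k>0$ and inducting on $k$ yields \eqref{eq:Ek-contraction}.

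\emph{Remaining subtleties.} First, positivity of $P_k$ must be preserved along the scheme, so that the norms are meaningful and Lemma~\ref{lemma3} applies. From \eqref{AHNAG:c} we have $P_{k+1}=(1+\alpha_k)^{-1}\big(P_k+\alpha_k\gamma_kP_k^{-1}G_{k+1}^2\big)$. This is diagonal, and it is positive definite whenever $P_k$ is, so positivity follows by induction from $P_0\succ0$.

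Second, the condition $\alpha_k\le\sqrt{\eta_k/2}$ is not actually used in the contraction argument. It only makes the choice of $\alpha_k$ compatible with \eqref{eq:alpha-consistency}, and I would remark on this rather than invoke it.

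I expect no real obstacle. The only point needing care is the index bookkeeping: $\eta_{k+1}$ is computed with the lagged metric $P_k$, which matches the metric in which Lemma~\ref{lemma1-euclid-Bform} is applied inside Lemma~\ref{lemma3}. The zero-gradient convention also has to be handled so that $\bar\eta$ is never evaluated at a vanishing gradient.
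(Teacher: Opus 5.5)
Your proposal is correct and follows the paper's proof essentially step for step. Both arguments invoke Lemma~\ref{lemma3}, use $\gamma_kR^2=\alpha_k$ to rewrite $M_{k+1}=\big(2\alpha_k^2-\eta_{k+1}(1+\alpha_k)\big)P_k^{-1}\preceq0$ via \eqref{eq:alpha-consistency}, and then rearrange to the factor $(1+\alpha_k)^{-1}$ and iterate; your added checks on the positivity of $P_k$ and on the zero-gradient case are valid supplements to points the paper treats only implicitly or by termination.
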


\begin{proof}
	By Lemma~\ref{lemma3}, we have
	\[
	\mathcal{E}(z_{k+1}^+,P_{k+1})-\mathcal{E}(z_k^+,P_k)
	\le
	-\alpha_k \mathcal{E}(z_{k+1}^+,P_{k+1})
	+\frac{1}{2}\|\nabla f(x_{k+1})\|_{M_{k+1}}^2.
	\]
	Using \(\gamma_kR^2=\alpha_k\), we obtain
	\[
	M_{k+1}
	=
	\big(2\alpha_k^2-\eta_{k+1}(1+\alpha_k)\big)P_k^{-1}.
	\]
	Assumption \eqref{eq:alpha-consistency} therefore implies \(M_{k+1}\preceq0\), and thus
	\[
	\mathcal{E}(z_{k+1}^+,P_{k+1})-\mathcal{E}(z_k^+,P_k)
	\le
	-\alpha_k \mathcal{E}(z_{k+1}^+,P_{k+1}).
	\]
	Rearranging gives
	\[
	\mathcal{E}(z_{k+1}^+,P_{k+1})
	\le
	\frac{1}{1+\alpha_k}\mathcal{E}(z_k^+,P_k).
	\]
	Iterating this inequality yields \eqref{eq:Ek-contraction}.
	If \(\nabla f(x_\ell)=0\) for some iterate, convexity gives \(x_\ell\in\arg\min f\), so the method can be terminated at that point; otherwise the adaptive stepsizes above are well defined.
\end{proof}

\begin{proposition}[Accelerated rate under exact adaptive steps]\label{prop:nonnegative-feedback-rate}
	Under the assumptions and parameter choices of Theorem~\ref{thm:lyapunov-contraction}, suppose that every nonstationary iterate accepts the exact adaptive step
	$$
	\alpha_k=\sqrt{\frac{\eta_k}{2}},
	\qquad
	\eta_k=\bar\eta(P_{k-1}^{-1},\nabla f(x_k)).
	$$
	Then
	$$
	\rho_k:=\prod_{j=0}^k(1+\alpha_j)^{-1}=\mathcal{O}(k^{-2}),
	$$
	and hence
	$$
	\mathcal{E}(z_{k+1}^+,P_{k+1})=\mathcal{O}(k^{-2}).
	$$
\end{proposition}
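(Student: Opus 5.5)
The plan is to find a lower bound on the adaptive step $\alpha_k$ in terms of the product $\rho_{k-1}$ itself, and then turn that self-referential bound into an $\mathcal O(k^{-2})$ estimate by a discrete comparison argument. Set $\rho_{-1}:=1$ and $\lambda_0:=\min_i (P_0)_{ii}>0$. Stationary iterates terminate the method (Theorem~\ref{thm:lyapunov-contraction}), so I will assume every iterate is nonstationary.

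\emph{Step 1: lower bound on $\eta_k$ via the smallest eigenvalue of the metric.} For a diagonal $P=\mathrm{diag}(p_i)\succ0$ and $g\neq0$, write
\[
\frac{\|g\|_{P^{-1}}^2}{\|g\|_{P^{-2}}^2}=\frac{\sum_i p_i^{-1}g_i^2}{\sum_i p_i^{-2}g_i^2}.
\]
This is a weighted average of the $p_i$, with weights $p_i^{-2}g_i^2$. Hence it is at least $\min_i p_i$. Therefore $\eta_k=\bar\eta(P_{k-1}^{-1},\nabla f(x_k))\ge \lambda_{\min}(P_{k-1})/L$.

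\emph{Step 2: the metric decays no faster than $\rho$.} By \eqref{AHNAG:c},
\[
P_{k+1}=\frac{1}{1+\alpha_k}\big(P_k+\alpha_k\gamma_kP_k^{-1}G_{k+1}^2\big)\succeq \frac{1}{1+\alpha_k}P_k .
\]
This uses $\gamma_k\ge0$ and that all matrices are diagonal and nonnegative. Inducting gives $P_k\succeq \rho_{k-1}P_0$ for all $k\ge0$; at $k=0$ this reads $P_0\succeq P_0$. Since $P_{-1}=P_0$ and $\rho$ is nonincreasing, $\lambda_{\min}(P_{k-1})\ge \lambda_0\rho_{k-2}\ge\lambda_0\rho_{k-1}$ for all $k\ge0$, with $\rho_{-2}:=1$. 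Combining with Step 1 and the exact choice $\alpha_k=\sqrt{\eta_k/2}$ gives
\[
\alpha_k\ge c\sqrt{\rho_{k-1}},\qquad c:=\sqrt{\lambda_0/(2L)}.
\]

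\emph{Step 3: discrete comparison.} Let $s_k:=\rho_k^{-1/2}$, so $s_{-1}=1$, $s_k$ is nondecreasing, and $s_k=s_{k-1}\sqrt{1+\alpha_k}$. Thus $s_k-s_{k-1}=s_{k-1}\big(\sqrt{1+\alpha_k}-1\big)$. I split into two cases.
\begin{itemize}
\item If $\alpha_k\le1$, then $\sqrt{1+\alpha_k}-1\ge \alpha_k/(1+\sqrt2)$. Hence $s_k-s_{k-1}\ge s_{k-1}\,c\sqrt{\rho_{k-1}}/(1+\sqrt2)=c/(1+\sqrt2)$, because $s_{k-1}\sqrt{\rho_{k-1}}=1$.
\item If $\alpha_k>1$, then $s_k-s_{k-1}\ge(\sqrt2-1)s_{k-1}\ge\sqrt2-1$, since $s_{k-1}\ge1$.
\end{itemize}
In either case $s_k-s_{k-1}\ge c':=\min\{c/(1+\sqrt2),\sqrt2-1\}>0$. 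Summing gives $s_k\ge 1+c'(k+1)$, so $\rho_k\le (1+c'(k+1))^{-2}=\mathcal O(k^{-2})$. The energy statement then follows directly from \eqref{eq:Ek-contraction}.

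The main obstacle is Step 2. A priori the adaptive metric $P_k$ could collapse, making $\eta_k$ and hence $\alpha_k$ tiny, and no upper bound on $\nabla f$ along the trajectory is available to prevent this. The key observation is that the feedback term in the $P$-update is nonnegative. Consequently the only shrinkage of $P$ is the factor $(1+\alpha_k)^{-1}$, which is exactly the factor in $\rho$. This ties the decay of the step to the contraction itself and yields the self-consistent bound $\alpha_k\gtrsim\sqrt{\rho_{k-1}}$. The case split in Step 3 avoids needing any upper bound on $\alpha_k$.
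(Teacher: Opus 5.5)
Your proof is correct and follows essentially the same route as the paper: nonnegativity of the feedback term gives $P_k\succeq\rho_{k-1}P_0$, the Rayleigh-quotient (weighted-average) bound gives $\alpha_k\ge c\sqrt{\rho_{k-1}}$, and then $\rho^{-1/2}$ grows by a uniform increment per step. The only cosmetic difference is that you bound the increment by a case split on $\alpha_k\le 1$ versus $\alpha_k>1$, whereas the paper uses the identity $s(\sqrt{1+c/s}-1)=c/(\sqrt{1+c/s}+1)$ together with $s\ge 1$.
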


\begin{proof}
	Since $\gamma_k\geq0$ and $G_{k+1}^2$ is positive semidefinite, the $P$-update gives
	$$
	P_{k+1}\succeq\frac{1}{1+\alpha_k}P_k.
	$$
	Therefore,
	$$
	P_k\succeq\rho_{k-1}P_0,
	\qquad
	\rho_{k-1}:=\prod_{j=0}^{k-1}(1+\alpha_j)^{-1},
	$$
	with $\rho_{-1}=1$. Let $\mu_0:=\lambda_{\min}(P_0)>0$. For any diagonal $P\succ0$ and $g\neq0$, setting $h=P^{-1}g$ gives
	$$
	L\bar\eta(P^{-1},g)
	=
	\frac{g^\top P^{-1}g}{g^\top P^{-2}g}
	=
	\frac{h^\top Ph}{h^\top h}
	\geq
	\lambda_{\min}(P).
	$$
	For $k=0$, $P_{-1}=P_0$. For $k\geq1$,
	$$
	P_{k-1}\succeq\rho_{k-2}P_0\succeq\rho_{k-1}P_0.
	$$
	Hence
	$$
	\alpha_k
	\geq
	c_0\sqrt{\rho_{k-1}},
	\qquad
	c_0:=\sqrt{\frac{\mu_0}{2L}}.
	$$
	Define $s_k:=\rho_{k-1}^{-1/2}$. Since $\rho_{k-1}\leq1$, we have $s_k\geq1$, and
	$$
	s_{k+1}
	=
	s_k\sqrt{1+\alpha_k}.
	$$
	Therefore,
	$$
	\begin{aligned}
	s_{k+1}-s_k
	&=
	s_k\big(\sqrt{1+\alpha_k}-1\big)\geq
	s_k\left(\sqrt{1+\frac{c_0}{s_k}}-1\right)\\
	&=
	\frac{c_0}{\sqrt{1+c_0/s_k}+1}
	\geq
	\frac{c_0}{\sqrt{1+c_0}+1}
	=:\delta_0>0.
	\end{aligned}
	$$
	Thus $s_{k+1}\geq1+(k+1)\delta_0$, and consequently
	$$
	\rho_k=s_{k+1}^{-2}
	\leq
	\frac{1}{\big(1+(k+1)\delta_0\big)^2}
	=
	\mathcal{O}(k^{-2}).
	$$
	The Lyapunov contraction then gives
	$$
	\mathcal{E}(z_{k+1}^+,P_{k+1})
	\leq
	\rho_k\mathcal{E}(z_0^+,P_0)
	=
	\mathcal{O}(k^{-2}).
	$$
\end{proof}

\begin{remark}
The conclusion requires a uniform lower bound on $\alpha_k$. If backtracking can reduce $\alpha_k$ arbitrarily below $\sqrt{\eta_k/2}$, the upper bound $\alpha_k\leq\sqrt{\eta_k/2}$ alone does not imply an $O(k^{-2})$ rate. This rate matches that established for HNAG in~\cite{chen2021unifiedconvergenceanalysisorder}.
\end{remark}

%%%%%%%%%%%%%%%%%%%%%%%%%%%%%%%%
\subsection{A one-coordinate surrogate for adaptive decay}\label{sec:decayanalysis}
%%%%%%%%%%%%%%%%%%%%%%%%%%%%%%%%

Nonnegative feedback preserves the accelerated upper bound but does not, by itself, prove a faster rate. As long as the consistency condition \eqref{eq:alpha-consistency} holds, a larger accepted $\alpha_k$ decreases $\rho_k$ and tightens the product bound.

The following discussion is heuristic. To obtain a scalar surrogate, fix a coordinate $i$ and define $p_k:=(P_k)_{ii}$. The $i$-th diagonal entry of the $P$-equation satisfies
\begin{equation}\label{eq:pk-rec}
	p_{k+1}-p_k
	=
	-\alpha_kp_{k+1}
	+\alpha_k\gamma_kp_k^{-1}\big(\partial_i f(x_{k+1})\big)^2.
\end{equation}
If $P_k=p_kI$, then \eqref{eq:etabar} gives $\bar\eta(P_k^{-1},g)=p_k/L$ for every $g\neq0$. We therefore use the scalar relation
\begin{equation}\label{eq:alpha-rec}
	\alpha_k=\sqrt{\frac{p_k}{2L}},
\end{equation}
for which $2\alpha_k^2=p_k/L$. In this scalar model, this choice is compatible with the consistency condition \eqref{eq:alpha-consistency}.

\subsubsection{Forced case ($q_{k+1}\not\equiv0$)}

Set $\gamma_k=\alpha_k/R^2$ and define $q_{k+1}:=\partial_i f(x_{k+1})/R$. Substituting $\alpha_k^2=p_k/(2L)$ into \eqref{eq:pk-rec} gives
\begin{equation}\label{eq:pk-rec2}
	p_{k+1}-p_k
	=
	-\alpha_kp_{k+1}
	+
	\frac{q_{k+1}^2}{2L}.
\end{equation}
Equation \eqref{eq:pk-rec2} shows the competition between the dissipation $\alpha_kp_{k+1}$ and the forcing $q_{k+1}^2/(2L)$.

\paragraph{Constant forcing} To isolate the effect of forcing, suppose that $q_{k+1}^2\equiv q^2>0$. Then \eqref{eq:pk-rec2} becomes
$$
p_{k+1}
=
\frac{p_k+q^2/(2L)}{1+\sqrt{p_k/(2L)}}.
$$
Its unique positive fixed point is
$$
p_\star=\left(\frac{q^4}{2L}\right)^{1/3},
\qquad
\alpha_\star=\sqrt{\frac{p_\star}{2L}}
=\left(\frac{q^2}{4L^2}\right)^{1/3}.
$$
If $p_k\to p_\star$, then $\alpha_k\to\alpha_\star>0$ and
$$
\lim_{k\to\infty}\rho_k^{1/k}=(1+\alpha_\star)^{-1}<1.
$$
Thus, persistent forcing can prevent $\alpha_k$ from vanishing and yield a geometric product bound, faster than the unforced $O(k^{-2})$ bound. This conclusion concerns only the scalar surrogate.

\paragraph{Asymptotically vanishing forcing} Constant nonzero forcing is incompatible with convergence to a stationary point. Along a convergent trajectory, $q_k\to0$, and its decay relative to $p_k$ determines the product bound. If $p_k$ varies slowly, then neglecting the left-hand side of \eqref{eq:pk-rec2} suggests the quasi-steady balance
$$
\alpha_kp_k\approx\frac{q_k^2}{2L}.
$$
Combining this balance with $\alpha_k=\sqrt{p_k/(2L)}$ gives
\begin{equation}\label{eq:quasi-steady}
	p_k\approx(2L)^{-1/3}|q_k|^{4/3},
	\qquad
	\alpha_k\approx(2L)^{-2/3}|q_k|^{2/3}.
\end{equation}
This scalar model suggests two regimes.
\begin{enumerate}
	\item If $|q_k|^2=o(k^{-3})$, then the forcing is asymptotically smaller than the unforced dissipation $\alpha_kp_k\asymp k^{-3}$. The dynamics are therefore expected to recover the unforced $k^{-2}$ scaling.
	\item If $|q_k|^2$ decays more slowly than $k^{-3}$, then the forcing remains significant. Relation \eqref{eq:quasi-steady} suggests that $\alpha_k$ decays more slowly than $k^{-1}$, so $\rho_k$ decays faster than $k^{-2}$.
\end{enumerate}
The critical case $|q_k|^2\asymp k^{-3}$ separates these regimes. The forcing and dissipation then have the same order, and their coefficients determine the leading behavior.

\subsubsection{Feedback dynamics}

The scalar recursion \eqref{eq:pk-rec} should be read together with the energy estimate
\begin{equation}\label{eq:E-contract}
	\frac{1}{2L}\|\nabla f(x_{k+1}^+)\|^2
	\le
	f(x_{k+1}^+)-f(x^\star)
	\le
	\mathcal E_{k+1}
	\le
	\rho_k\mathcal E_0.
\end{equation}
Thus, faster decay of $\rho_k$ gives a faster-decaying upper bound on the Lyapunov function and the objective gap. By $L$-smoothness, this is consistent with a smaller gradient, which in turn weakens the forcing term in \eqref{eq:pk-rec}. This suggests a feedback mechanism: a larger admissible $\alpha_k$ tightens the Lyapunov bound, the resulting decrease in the gradient weakens the forcing, and the recursion moves toward the unforced accelerated regime.

This picture suggests two phases: adaptive effects are stronger when the gradients are large, while the dynamics gradually approach a $k^{-2}$-type accelerated regime as the gradients vanish.

\subsection{Practical resolution of the consistency condition}\label{sec:practical-alpha}

We next discuss how to verify \eqref{eq:alpha-consistency}. The difficulty is that \(x_{k+1}\) itself depends on \(\alpha_k\) so that \eqref{eq:alpha-consistency} is nonlinear in $\alpha_k$. If \eqref{eq:alpha-consistency} fails, equivalently,
\[
\eta_{k+1}<\frac{2\alpha_k^2}{1+\alpha_k},
\]
we reset
\[
\alpha_k\leftarrow \sqrt{\eta_{k+1}/2},
\]
recompute \(x_{k+1}\) and the updated value of \(\eta_{k+1}\), and repeat if necessary. 

We now formally discuss this inner iteration. Let \(P\succ0\) be fixed. Assume \(f\in C^1\) and that the map \(\alpha\mapsto x(\alpha)\) is continuous. The discussion concerns nonstationary trial points; if \(\nabla f(x(\alpha))=0\), the method has reached a minimizer and can stop. Define
\[
\eta(\alpha):=\bar\eta\big(P^{-1},\nabla f(x(\alpha))\big),
\]
and, starting from an initial guess \(\alpha^{(0)}\), consider the fixed point iteration
\[
\alpha^{(m+1)}:=\Phi(\alpha^{(m)}):=\sqrt{\eta(\alpha^{(m)})/2},
\]
applied whenever
\[
2(\alpha^{(m)})^2>(1+\alpha^{(m)})\,\eta(\alpha^{(m)}).
\]

\begin{proposition}[Convergence of the stepsize inner loop]\label{proposition1}
	Either the stepsize inner loop terminates after finitely many steps, or the sequence \(\{\alpha^{(m)}\}_{m\ge0}\) is strictly decreasing and converges to a limit \(\alpha^\star\ge0\) satisfying
	\[
	\alpha^\star=\Phi(\alpha^\star),
	\qquad\text{equivalently}\qquad
	2(\alpha^\star)^2=\eta(\alpha^\star).
	\]
	In particular, it satisfies the consistency condition
	\[
	2(\alpha^\star)^2\le (1+\alpha^\star)\eta(\alpha^\star).
	\]
\end{proposition}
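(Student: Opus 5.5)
The argument works directly with the update rule of the inner loop. The loop continues from $\alpha^{(m)}$ only when $2(\alpha^{(m)})^2>(1+\alpha^{(m)})\eta(\alpha^{(m)})$. If at some index this fails, the loop stops after finitely many steps and the accepted $\alpha^{(m)}$ satisfies the consistency condition by construction. So we may assume the loop never stops, i.e.
\[
2(\alpha^{(m)})^2>(1+\alpha^{(m)})\,\eta(\alpha^{(m)})\qquad\forall m\ge 0.
\]

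\textbf{Monotonicity.} Because $\alpha^{(m)}>0$ and $\eta>0$ at nonstationary points, the inequality above gives $2(\alpha^{(m)})^2>\eta(\alpha^{(m)})$. Taking square roots,
\[
\alpha^{(m+1)}=\sqrt{\eta(\alpha^{(m)})/2}<\alpha^{(m)}.
\]
The sequence is therefore strictly decreasing and bounded below by $0$, so it converges to some $\alpha^\star\ge0$.

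\textbf{Fixed-point identity.} We pass to the limit in $2(\alpha^{(m+1)})^2=\eta(\alpha^{(m)})$. The left side tends to $2(\alpha^\star)^2$. For the right side we need continuity of $\eta$ at $\alpha^\star$, which we get by composition:
\begin{itemize}
\item $\alpha\mapsto x(\alpha)$ is continuous by assumption;
\item $\nabla f$ is continuous since $f\in C^1$;
\item $g\mapsto \bar\eta(P^{-1},g)=\frac1L\,\|g\|_{P^{-1}}^2/\|g\|_{P^{-2}}^2$ is continuous on $g\neq0$.
\end{itemize}
This requires $\nabla f(x(\alpha^\star))\neq0$. That is the stated nonstationarity convention: if the limiting trial point were stationary, the method would have found a minimizer and stopped. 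Granting this, we obtain $2(\alpha^\star)^2=\eta(\alpha^\star)$, i.e. $\alpha^\star=\Phi(\alpha^\star)$.

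\textbf{Consistency condition.} Since $\eta(\alpha^\star)\ge0$ and $1+\alpha^\star\ge1$,
\[
2(\alpha^\star)^2=\eta(\alpha^\star)\le(1+\alpha^\star)\eta(\alpha^\star).
\]

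\textbf{Main obstacle.} The delicate point is the limit step when $\alpha^\star=0$, or more generally whenever $\eta$ might be discontinuous at the limit. Continuity of $\bar\eta$ fails exactly where the gradient vanishes. One useful fact is that $\bar\eta$ is bounded between $\lambda_{\min}(P)/L$ and $\lambda_{\max}(P)/L$: with $h=P^{-1}g$ one has $\bar\eta=\frac1L\,h^\top Ph/h^\top h$, as in the proof of Proposition~\ref{prop:nonnegative-feedback-rate}. Consequently $\alpha^{(m+1)}\ge\sqrt{\lambda_{\min}(P)/(2L)}>0$ for every $m$, so $\alpha^\star>0$ and the case $\alpha^\star=0$ cannot occur for fixed $P\succ0$. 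This leaves only the nonstationarity of $x(\alpha^\star)$, which we cover by the stated convention.
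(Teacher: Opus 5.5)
Your proof is correct and is essentially the paper's argument: strict decrease from the rejection inequality, monotone convergence to $\alpha^\star$, and continuity of $\Phi$ to identify $\alpha^\star$ as a fixed point (this last step needs $\nabla f(x(\alpha^\star))\neq0$, which you rightly flag and the paper leaves implicit). Your Rayleigh-quotient bound actually proves more than you use: in the non-terminating case $(1+\alpha^{(m)})(\alpha^{(m+1)})^2<(\alpha^{(m)})^2$, and letting $m\to\infty$ (no continuity needed) gives $(\alpha^\star)^3\le0$, i.e.\ $\alpha^\star=0$, which contradicts your bound $\alpha^\star\ge\sqrt{\lambda_{\min}(P)/(2L)}>0$; so for fixed $P\succ0$ the loop always terminates after finitely many steps, and you no longer need a convention about the point $x(\alpha^\star)$, which the loop never evaluates.
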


\begin{proof}
	If the consistency condition holds at some step, the procedure stops. Otherwise, for every \(m\),
	\[
	\eta(\alpha^{(m)})<\frac{2(\alpha^{(m)})^2}{1+\alpha^{(m)}},
	\]
	and therefore
	\[
	\alpha^{(m+1)}
	=
	\sqrt{\frac{\eta(\alpha^{(m)})}{2}}
	<
	\frac{\alpha^{(m)}}{\sqrt{1+\alpha^{(m)}}}
	<
	\alpha^{(m)}.
	\]
	Thus \(\{\alpha^{(m)}\}\) is strictly decreasing. Since \(\alpha^{(m)}\ge0\), it converges to some limit \(\alpha^\star\ge0\).
	
	By continuity of \(\alpha\mapsto x(\alpha)\), the map \(\alpha\mapsto \eta(\alpha)\) is continuous, and so is \(\Phi\). The \YY{continuity} of $\Phi$ 
	gives
	\[
	\alpha^\star=\Phi(\alpha^\star),
	\]
	hence \(2(\alpha^\star)^2=\eta(\alpha^\star)\). The last inequality is immediate since \(1+\alpha^\star\ge1\).
\end{proof}

This stepsize inner loop is mainly a theoretical device. In practice, the simpler choice
\[
\alpha_k=\sqrt{\eta_k/2}
\]
typically satisfies the consistency condition after a few initial iterations.

\subsubsection{Boundedness assumption}\label{sec:boundedness}

For the analysis, we assume \eqref{eq:Ybound}. Bounding the iterates is a common difficulty in the analysis of adaptive methods~\cite{duchi11a,gupta2018shampoo,liu2024adagradanisotropicsmoothness,an2025asgoadaptivestructuredgradient,kovalev2025,xie2025structuredpreconditionersadaptiveoptimization}. When a conservative bound on $x^\star$ is available, this assumption can be enforced by projection. Choose $R>0$ such that $\|x^\star\|_\infty\le R/2$ and define
$$
\mathcal{H}_R:=\{y\in\mathbb{R}^d:\|y\|_\infty\le R/2\}.
$$
Starting with $y_0\in\mathcal{H}_R$, replace the $y$-update by
\begin{equation}\label{eq:projected-y-update}
	\begin{aligned}
		y_{k+\frac{1}{2}} &= y_k-\alpha_kP_k^{-1}\nabla f(x_{k+1}),\\
		y_{k+1} &= \operatorname*{argmin}_{y\in\mathcal{H}_R}\|y-y_{k+\frac{1}{2}}\|_{P_k}^2.
	\end{aligned}
\end{equation}
Since $\mathcal{H}_R$ is nonempty, closed, and convex, and $P_k\succ0$, the weighted projection exists and is unique. Since $P_k$ is diagonal, it can be computed by coordinatewise clipping. Its optimality condition is
$$
\left\langle P_k\left(y_{k+1}-y_{k+\frac{1}{2}}\right),y-y_{k+1}\right\rangle\ge0
\qquad\text{for all }y\in\mathcal{H}_R.
$$
Taking $y=x^\star\in\mathcal{H}_R$ and expanding the square gives
$$
\|y_{k+1}-x^\star\|_{P_k}^2
\le
\left\|y_{k+\frac{1}{2}}-x^\star\right\|_{P_k}^2.
$$
The projection leaves $x_{k+1}$ unchanged and can only decrease the $y$-component of the Lyapunov function. The proof of Theorem~\ref{thm:lyapunov-contraction} therefore remains valid. Moreover,
$$
\|y_k-x^\star\|_\infty
\le
\|y_k\|_\infty+\|x^\star\|_\infty
\le R,
$$
so \eqref{eq:Ybound} holds by construction.

%%%%%%%%%%%%%%%%%%%%%%%%%%%%%%%%
\section{Adam-HNAG-s: the synchronous Adam-style discretization}\label{sec:adamhnags}
%%%%%%%%%%%%%%%%%%%%%%%%%%%%%%%%

To complete the Adam-motivated picture, we also introduce a synchronous discretization that is closer in form to full-batch Adam.

\subsection{Adam-HNAG-s}
Consider the following discretization of \eqref{eq:Adaptive-HNAG}:
\begin{subequations}\label{eq:synAdamHNAG}
	\begin{align}
		\frac{x_{k+1}-x_k}{\alpha_k} &= y_k-x_{k+1}-\beta_k P_k^{-1}\nabla f(x_k), \label{synAHNAG:a}\\
		\frac{P_{k+1}-P_k}{\tilde{\alpha}_k} &= -P_k+\gamma_k P_{k+1}^{-1}G^2_{k+1}, \label{synAHNAG:c}\\
		\frac{y_{k+1}-y_k}{\tilde{\alpha}_k} &= -P_{k+1}^{-1}\nabla f(x_{k+1}), \label{synAHNAG:b}
	\end{align}
\end{subequations}
with given $x_0$, $y_0$, and diagonal $P_0\succ 0$. Here
$
\tilde{\alpha}_k:=\frac{\alpha_k}{1+\alpha_k}\le \alpha_k
$
is a slightly smaller step size.
%, and
%$
%G_{k+1}^2=\mathrm{diag}\big(\nabla f(x_{k+1}).^2\big).$
Since the updated preconditioner $P_{k+1}$ is used in the gradient term at $x_{k+1}$, this scheme is referred to as {\it Adam-HNAG-s}.

The corresponding implementable recursion is given in Algorithm~\ref{alg:adam-hnag-s}. Here $R>0$ is the scale parameter. The stabilized inverse is used in the gradient update and the directional step-size ratio. The closed-form update of $P_{k+1}$ is the exact solution of the unstabilized metric equation and preserves positive definiteness when $P_0\succ0$.

\renewcommand{\baselinestretch}{1.2}\selectfont
\begin{algorithm}[H]
	\caption{Adam-HNAG-s}
	\label{alg:adam-hnag-s}
	\begin{algorithmic}[1]
		\REQUIRE Initial data \(x_0,y_0\in\mathbb{R}^d\), diagonal \(P_0\succ0\), \(L\), \(R>0\), \(T\ge1\), \(\mathrm{InnerLoop}\in\{\mathrm{True},\mathrm{False}\}\), and \(\varepsilon\ge0\)
		\STATE Set \(g_0=\nabla f(x_0)\); if \(g_0=0\), return \(x_0\)
		\STATE \(\eta_0=\dfrac{1}{L}\dfrac{\|g_0\|_{(P_0+\varepsilon I)^{-1}}^2}{\|g_0\|_{(P_0+\varepsilon I)^{-2}}^2}\)
		\FOR{\(k=0,1,\ldots,T-1\)}
		\STATE Set \(\alpha_k=\sqrt{\eta_k/2}\), \(\tilde{\alpha}_k=\dfrac{\alpha_k}{1+\alpha_k}\), and \(\gamma_k=\tilde{\alpha}_k/R^2\)
		\STATE Set \(\mathrm{Accept}=\mathrm{False}\)
		\WHILE{\(\mathrm{Accept}=\mathrm{False}\)}
		\STATE \(x_{k+1}^{\rm tr}=\dfrac{1}{1+\alpha_k}x_k+\dfrac{\alpha_k}{1+\alpha_k}y_k-\dfrac{\eta_k}{1+\alpha_k}(P_k+\varepsilon I)^{-1}g_k\)
		\STATE Set \(g_{k+1}^{\rm tr}=\nabla f(x_{k+1}^{\rm tr})\); if \(g_{k+1}^{\rm tr}=0\), return \(x_{k+1}^{\rm tr}\)
		\STATE Set \((G_{k+1}^{2})^{\rm tr}=\operatorname{diag}((g_{k+1}^{\rm tr}).^2)\)
		\STATE \(P_{k+1}^{\rm tr}=\dfrac{1-\tilde{\alpha}_k}{2}P_k+\dfrac{1}{2}\sqrt{(1-\tilde{\alpha}_k)^2P_k^2+4\tilde{\alpha}_k\gamma_k(G_{k+1}^{2})^{\rm tr}}\)
		\STATE \(y_{k+1}^{\rm tr}=y_k-\tilde{\alpha}_k(P_{k+1}^{\rm tr}+\varepsilon I)^{-1}g_{k+1}^{\rm tr}\)
		\STATE \(\eta_{k+1}^{\rm tr}=\dfrac{1}{L}\dfrac{\|g_{k+1}^{\rm tr}\|_{(P_{k+1}^{\rm tr}+\varepsilon I)^{-1}}^2}{\|g_{k+1}^{\rm tr}\|_{(P_{k+1}^{\rm tr}+\varepsilon I)^{-2}}^2}\)
		\IF{\(\mathrm{InnerLoop}=\mathrm{False}\) or \(2\tilde{\alpha}_k^2\le \eta_{k+1}^{\rm tr}\)}
		\STATE Set \(x_{k+1}=x_{k+1}^{\rm tr}\), \(y_{k+1}=y_{k+1}^{\rm tr}\), \(P_{k+1}=P_{k+1}^{\rm tr}\), \(g_{k+1}=g_{k+1}^{\rm tr}\), and \(\eta_{k+1}=\eta_{k+1}^{\rm tr}\)
		\STATE Set \(\mathrm{Accept}=\mathrm{True}\)
		\ELSE
		\STATE Set \(\alpha_k=\sqrt{\eta_{k+1}^{\rm tr}/2}\), \(\tilde{\alpha}_k=\alpha_k/(1+\alpha_k)\), and \(\gamma_k=\tilde{\alpha}_k/R^2\)
		\ENDIF
		\ENDWHILE
		\ENDFOR
		\STATE \(x_T^+=x_T-\eta_T(P_T+\varepsilon I)^{-1}g_T\)
		\RETURN the postprocessed point \(x_T^+\)
	\end{algorithmic}
\end{algorithm}

%Although the right-hand side of \eqref{synAHNAG:c} involves $P_{k+1}$, the update can still be written in closed form
%$$
%P_{k+1} = \frac{1-\tilde{\alpha}_k}{2} P_k + \frac{1}{2}\sqrt{(1-\tilde{\alpha}_k)^2 P_k^2 +4\tilde{\alpha}_k \gamma_k G_{k+1}^2}. $$

\subsubsection{Convergence analysis}

%We also define the auxiliary point
%$$
%x_{k+1}^+
%=
%x_{k+1}-\eta_{k+1}P_{k+1}^{-1}\nabla f(x_{k+1}).
%$$
For Adam-HNAG-s, define
$$
x_k^+=x_k-\eta_kP_k^{-1}\nabla f(x_k),
$$
which uses the updated metric $P_k$ rather than the lagged metric $P_{k-1}$ of Adam-HNAG. Algorithm~\ref{alg:adam-hnag-s} returns this postprocessed point.

Using the Lyapunov function \eqref{lyapunov2}, we split the one-step change as
$$
\begin{aligned}
	\mathcal{E}(z_{k+1}^+,P_{k+1})-\mathcal{E}(z_k^+,P_k)
	={}&
	\mathcal{E}(z_{k+1}^+,P_{k+1})-\mathcal{E}(z_{k+1},P_{k+1})\\
	&+
	\mathcal{E}(z_{k+1},P_{k+1})-\mathcal{E}(z_k^+,P_k).
\end{aligned}
$$
If $\nabla f(x_{k+1})=0$, the estimate below follows from the zero-gradient convention. Otherwise, Lemma~\ref{lemma1-euclid-Bform} gives, under the condition
$$
0<\eta_{k+1}\le\bar\eta(P_{k+1}^{-1},\nabla f(x_{k+1})),
$$
we have the sufficient decay
\begin{equation}\label{equ21}
	\mathcal{E}(z_{k+1}^+,P_{k+1})
	\le
	\mathcal{E}(z_{k+1},P_{k+1})
	-\frac{\eta_{k+1}}{2}\|\nabla f(x_{k+1})\|_{P_{k+1}^{-1}}^2.
\end{equation}
The next lemma bounds $\mathcal{E}(z_{k+1},P_{k+1})$ in terms of $\mathcal{E}(z_k^+,P_k)$.

\begin{lemma}\label{lemma4}
	Let $f\in\mathcal{S}_L$. Starting from $x_0$, $y_0$, and diagonal $P_0\succ 0$, generate $(x_k,y_k,P_k)$ by \eqref{eq:synAdamHNAG} and define $x_k^+=x_k-\eta_kP_k^{-1}\nabla f(x_k)$. Then
	$$
	\mathcal{E}(z_{k+1}, P_{k+1})
	\le
	(1-\tilde{\alpha}_k)\mathcal{E}(z_k^+, P_k)
	+ \frac{\tilde{\alpha}_k^2}{2}\|\nabla f(x_{k+1})\|^2_{P_{k+1}^{-1}}
	+ \frac{\tilde{\alpha}_k \gamma_k}{2}\|y_k-x^\star\|_{P_{k+1}^{-1}G_{k+1}^2}^{2}.
	$$
\end{lemma}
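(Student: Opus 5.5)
The plan is a direct one-step computation. I will treat the $f$-part and the $y$-part of $\mathcal{E}$ separately. The cross term $\tilde\alpha_k\langle\nabla f(x_{k+1}),y_k-x^\star\rangle$ produced by the $y$-update should cancel exactly against the one produced by convexity. The metric update \eqref{synAHNAG:c} should then split $\|y_k-x^\star\|_{P_{k+1}}^2$ into the contracted term and the forcing term. Throughout, write $g:=\nabla f(x_{k+1})$.

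\emph{Step 1 (rewrite the $x$-update).} Since $\eta_k=\alpha_k\beta_k$ and $x_k^+=x_k-\eta_kP_k^{-1}\nabla f(x_k)$, equation \eqref{synAHNAG:a} reads $x_{k+1}-x_k^+=\alpha_k(y_k-x_{k+1})$. Solving for $x_{k+1}$ gives
\[
x_{k+1}=\frac{1}{1+\alpha_k}x_k^+ +\frac{\alpha_k}{1+\alpha_k}y_k=(1-\tilde\alpha_k)x_k^+ +\tilde\alpha_k y_k .
\]
So $x_{k+1}$ is a convex combination of $x_k^+$ and $y_k$ with weight $\tilde\alpha_k$.

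\emph{Step 2 (function part via convexity).} Apply the gradient inequality at $x_{k+1}$ twice: $f(x_k^+)\ge f(x_{k+1})+\langle g,x_k^+-x_{k+1}\rangle$ and $f(x^\star)\ge f(x_{k+1})+\langle g,x^\star-x_{k+1}\rangle$. Weight these by $1-\tilde\alpha_k$ and $\tilde\alpha_k$ and add. By Step 1,
\[
x_{k+1}-(1-\tilde\alpha_k)x_k^+-\tilde\alpha_k x^\star=\tilde\alpha_k(y_k-x^\star),
\]
so we get
\[
f(x_{k+1})-f(x^\star)\le(1-\tilde\alpha_k)\big(f(x_k^+)-f(x^\star)\big)+\tilde\alpha_k\langle g,y_k-x^\star\rangle .
\]

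\emph{Step 3 ($y$-part).} By \eqref{synAHNAG:b}, $y_{k+1}-x^\star=(y_k-x^\star)-\tilde\alpha_kP_{k+1}^{-1}g$. Expanding in the $P_{k+1}$-norm gives the exact identity
\[
\tfrac12\|y_{k+1}-x^\star\|_{P_{k+1}}^2=\tfrac12\|y_k-x^\star\|_{P_{k+1}}^2-\tilde\alpha_k\langle g,y_k-x^\star\rangle+\tfrac{\tilde\alpha_k^2}{2}\|g\|_{P_{k+1}^{-1}}^2 .
\]
The middle term cancels the cross term from Step 2.

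It remains to treat $\tfrac12\|y_k-x^\star\|_{P_{k+1}}^2$. Rearranging \eqref{synAHNAG:c} gives $P_{k+1}=(1-\tilde\alpha_k)P_k+\tilde\alpha_k\gamma_kP_{k+1}^{-1}G_{k+1}^2$. All matrices here are diagonal, hence commute, so $P_{k+1}^{-1}G_{k+1}^2$ is symmetric and the weighted norm is well defined. Therefore
\[
\tfrac12\|y_k-x^\star\|_{P_{k+1}}^2=(1-\tilde\alpha_k)\tfrac12\|y_k-x^\star\|_{P_k}^2+\tfrac{\tilde\alpha_k\gamma_k}{2}\|y_k-x^\star\|_{P_{k+1}^{-1}G_{k+1}^2}^2 .
\]

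\emph{Step 4 (combine).} Add Steps 2 and 3. The $f$-term and the $P_k$-term combine into $(1-\tilde\alpha_k)\mathcal{E}(z_k^+,P_k)$, which gives exactly the claimed bound. I also need $P_{k+1}\succ0$ for $P_{k+1}^{-1}$ to make sense. This follows from the closed-form positive root of the diagonal quadratic, as noted before Algorithm~\ref{alg:adam-hnag-s}, since $\tilde\alpha_k<1$.

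\emph{Main obstacle.} I expect the only delicate point to be Step 1: recognizing that, with the synchronous step $\tilde\alpha_k=\alpha_k/(1+\alpha_k)$ in the $y$- and $P$-updates, the same weight $\tilde\alpha_k$ appears in the convex combination for $x_{k+1}$. That matching is what makes the cross terms cancel exactly and avoids any Bregman/Young estimate, unlike Lemma~\ref{lem:I3}. If the weights did not match, I would fall back on Young's inequality in the $P_{k+1}$-metric, as in \eqref{equ6}.
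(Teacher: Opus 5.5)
Your proposal is correct and essentially reproduces the paper's proof. The paper also expands the $y$-update in the $P_{k+1}$-norm, splits $\tfrac12\|y_k-x^\star\|_{P_{k+1}}^2$ using the metric update, and cancels the cross term using the identity $\tilde{\alpha}_k(y_k-x^\star)=(1-\tilde{\alpha}_k)(x_{k+1}-x_k^+)+\tilde{\alpha}_k(x_{k+1}-x^\star)$ together with convexity. Your weighted two-point gradient inequality is the same step, arranged differently.
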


\begin{proof}
	From the update rule~\eqref{synAHNAG:b},
	\begin{equation}\label{equ16}
		\begin{aligned}
			\frac{1}{2}\|y_{k+1}-x^\star\|_{P_{k+1}}^{2}
			&=
			\frac{1}{2}\|y_k-x^\star-\tilde{\alpha}_k P_{k+1}^{-1}\nabla f(x_{k+1})\|_{P_{k+1}}^{2} \\
			&=
			\frac{1}{2}\|y_k-x^\star\|_{P_{k+1}}^{2}
			-\tilde{\alpha}_k \langle \nabla f(x_{k+1}), y_k-x^\star \rangle
			+\frac{\tilde{\alpha}_k^2}{2}\|\nabla f(x_{k+1})\|_{P_{k+1}^{-1}}^{2}.
		\end{aligned}
	\end{equation}
	Using the update of $P_{k+1}$ in~\eqref{synAHNAG:c}, we rewrite
	\begin{equation}\label{equ19}
		\begin{aligned}
			\frac{1}{2}\|y_k-x^\star\|_{P_{k+1}}^{2}
			&=
			\frac{1}{2}\|y_k-x^\star\|_{(1-\tilde{\alpha}_k)P_k + \tilde{\alpha}_k \gamma_k P_{k+1}^{-1} G_{k+1}^2}^{2} \\
			&=
			\frac{1-\tilde{\alpha}_k}{2}\|y_k-x^\star\|_{P_k}^{2}
			+ \frac{\tilde{\alpha}_k \gamma_k}{2}\|y_k-x^\star\|_{P_{k+1}^{-1}G_{k+1}^2}^{2}.
		\end{aligned}
	\end{equation}
	
	Next, using $\alpha_k = \frac{\tilde{\alpha}_k}{1-\tilde{\alpha}_k}$, the update~\eqref{synAHNAG:a} can be rewritten as
	\begin{equation}\label{equ17}
		\tilde{\alpha}_k(y_k - x^\star)
		=
		(1-\tilde{\alpha}_k)(x_{k+1} - x_k^+) + \tilde{\alpha}_k(x_{k+1} - x^\star).
	\end{equation}
	Substituting~\eqref{equ17} into the cross term in~\eqref{equ16} and using convexity of $f$ give
	\begin{equation}\label{equ18}
		\begin{aligned}
			-\tilde{\alpha}_k \langle \nabla f(x_{k+1}), y_k -x^\star \rangle
			&=
			-\tilde{\alpha}_k \langle \nabla f(x_{k+1}), x_{k+1} -x^\star \rangle \\
			&\quad
			-(1-\tilde{\alpha}_k) \langle \nabla f(x_{k+1}), x_{k+1} - x_k^+ \rangle \\
			&\le
			-\tilde{\alpha}_k (f(x_{k+1}) - f(x^\star))
			+ (1-\tilde{\alpha}_k) (f(x_k^+) - f(x_{k+1})) \\
			&=
			-(f(x_{k+1}) - f(x^\star))
			+ (1-\tilde{\alpha}_k) (f(x_k^+) - f(x^\star)).
		\end{aligned}
	\end{equation}
	
	Combining \eqref{equ16}, \eqref{equ19}, and \eqref{equ18}, then moving
	$-(f(x_{k+1})-f(x^\star))$ to the left-hand side and using the definition of
	$\mathcal{E}$, gives the result.
\end{proof}

We next establish convergence of Adam-HNAG-s.

\begin{theorem}[Synchronous Lyapunov contraction]\label{thm:lyapunov-contraction2}
	Let $f\in\mathcal{S}_L$ with $\arg\min f\neq\varnothing$, and fix $x^\star\in\arg\min f$. Starting from $x_0$, $y_0$, and a diagonal matrix $P_0\succ0$, generate $(x_k,y_k,P_k)$ by the exact scheme \eqref{eq:synAdamHNAG}. Define $x_k^+$ as above and $\mathcal{E}$ by \eqref{lyapunov2}. Assume that there exists $R>0$ such that
	$$
	\sup_{k\ge0}\|y_k-x^\star\|_\infty\le R.
	$$
	Set $\eta_k=\alpha_k\beta_k$. At each nonstationary iterate, choose
	$$
	\eta_k=\bar\eta(P_k^{-1},\nabla f(x_k)),
	\quad
	0<\alpha_k\le\sqrt{\eta_k/2},
	\quad
	\widetilde{\alpha}_k=\frac{\alpha_k}{1+\alpha_k},
	\quad
	\gamma_k=\frac{\widetilde{\alpha}_k}{R^2}.
	$$
	Assume further that
	\begin{equation}\label{ratio2}
		2\widetilde{\alpha}_k^2
		=
		\frac{2\alpha_k^2}{(1+\alpha_k)^2}
		\le\eta_{k+1}
		\qquad\text{for all }k\ge0.
	\end{equation}
	Then
	\begin{equation}
		\mathcal{E}(z_{k+1}^+,P_{k+1})
		\le
		\mathcal{E}(z_0^+,P_0)
		\prod_{j=0}^{k}\frac{1}{1+\alpha_j}.
	\end{equation}
\end{theorem}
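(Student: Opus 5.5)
Combine Lemma~\ref{lemma4} with the sufficient-decay estimate \eqref{equ21}, absorb the positive metric-feedback term using the trajectory bound and the choice of $\gamma_k$, and show that the remaining gradient coefficient is nonpositive under \eqref{ratio2}. This yields the one-step contraction
\[
\mathcal{E}(z_{k+1}^+,P_{k+1})\le (1-\tilde\alpha_k)\mathcal{E}(z_k^+,P_k)=\frac{1}{1+\alpha_k}\mathcal{E}(z_k^+,P_k),
\]
which iterates to the product bound. If $\nabla f(x_{k+1})=0$, all gradient terms vanish (zero-gradient convention), and the same inequality follows directly from Lemma~\ref{lemma4}.

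\textbf{Step 1.} Since $\eta_{k+1}=\bar\eta(P_{k+1}^{-1},\nabla f(x_{k+1}))$, Lemma~\ref{lemma1-euclid-Bform} applies and gives \eqref{equ21}. Adding it to Lemma~\ref{lemma4} yields
\[
\mathcal{E}(z_{k+1}^+,P_{k+1})\le(1-\tilde\alpha_k)\mathcal{E}(z_k^+,P_k)+\frac{\tilde\alpha_k^2-\eta_{k+1}}{2}\|\nabla f(x_{k+1})\|_{P_{k+1}^{-1}}^2+\frac{\tilde\alpha_k\gamma_k}{2}\|y_k-x^\star\|^2_{P_{k+1}^{-1}G_{k+1}^2}.
\]

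\textbf{Step 2.} Bound the feedback term as in \eqref{eq:force_bound}. Because $P_{k+1}$ and $G_{k+1}^2$ are diagonal, and $\|y_k-x^\star\|_\infty\le R$ (here it is $y_k$, not $y_{k+1}$, that appears, which the uniform assumption covers),
\[
\|y_k-x^\star\|^2_{P_{k+1}^{-1}G_{k+1}^2}\le R^2\|\nabla f(x_{k+1})\|^2_{P_{k+1}^{-1}}.
\]
With $\gamma_k=\tilde\alpha_k/R^2$, the feedback term is therefore at most $\frac{\tilde\alpha_k^2}{2}\|\nabla f(x_{k+1})\|^2_{P_{k+1}^{-1}}$.

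\textbf{Step 3.} The total coefficient of $\|\nabla f(x_{k+1})\|^2_{P_{k+1}^{-1}}$ is then $(2\tilde\alpha_k^2-\eta_{k+1})/2$, which is $\le0$ by \eqref{ratio2}. Finally, $1-\tilde\alpha_k=1/(1+\alpha_k)$, and telescoping gives the claim.

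The only point needing care is that the directional step bound in Step~1 uses the synchronous metric $P_{k+1}$. This is consistent because $\eta_{k+1}$ is defined with $P_{k+1}$, and $P_{k+1}\succ0$ follows from the closed-form update. With that checked, Steps~2 and~3 are direct computations, and the main thing to verify is that the Step~2 cancellation matches the coefficient $\tilde\alpha_k\gamma_k$ of Lemma~\ref{lemma4} exactly.
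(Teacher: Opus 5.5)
Your proposal is correct and follows the paper's proof essentially step for step. Like the paper, you combine \eqref{equ21} with Lemma~\ref{lemma4}, bound the feedback term by $R^2\|\nabla f(x_{k+1})\|_{P_{k+1}^{-1}}^2$ via $\|y_k-x^\star\|_\infty\le R$ and $\gamma_k=\widetilde{\alpha}_k/R^2$, drop the nonpositive coefficient $(2\widetilde{\alpha}_k^2-\eta_{k+1})/2$ using \eqref{ratio2}, and iterate with $1-\widetilde{\alpha}_k=(1+\alpha_k)^{-1}$ (your explicit handling of $\nabla f(x_{k+1})=0$ is a minor addition to the paper's termination remark).
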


\begin{proof}
	Combining \eqref{equ21} with Lemma~\ref{lemma4} gives
	\begin{equation}
		\begin{aligned}
			\mathcal{E}(z_{k+1}^+,P_{k+1})
			&\le
			(1-\widetilde{\alpha}_k)\mathcal{E}(z_k^+,P_k)
			+\frac{\widetilde{\alpha}_k^2-\eta_{k+1}}{2}
			\|\nabla f(x_{k+1})\|_{P_{k+1}^{-1}}^2\\
			&\quad
			+\frac{\widetilde{\alpha}_k\gamma_k}{2}
			\|y_k-x^\star\|_{P_{k+1}^{-1}G_{k+1}^2}^2.
		\end{aligned}
	\end{equation}
	Since $P_{k+1}^{-1}G_{k+1}^2$ is diagonal and $\|y_k-x^\star\|_\infty\le R$,
	$$
	\|y_k-x^\star\|_{P_{k+1}^{-1}G_{k+1}^2}^2
	\le
	R^2\|\nabla f(x_{k+1})\|_{P_{k+1}^{-1}}^2.
	$$
	Using $\gamma_k=\widetilde{\alpha}_k/R^2$, we obtain
	$$
	\frac{\widetilde{\alpha}_k\gamma_k}{2}
	\|y_k-x^\star\|_{P_{k+1}^{-1}G_{k+1}^2}^2
	\le
	\frac{\widetilde{\alpha}_k^2}{2}
	\|\nabla f(x_{k+1})\|_{P_{k+1}^{-1}}^2.
	$$
	The total coefficient of the gradient term is therefore
	$$
	\frac{2\widetilde{\alpha}_k^2-\eta_{k+1}}{2}\le0
	$$
	by \eqref{ratio2}. Dropping this nonpositive term yields
	$$
	\mathcal{E}(z_{k+1}^+,P_{k+1})
	\le
	(1-\widetilde{\alpha}_k)\mathcal{E}(z_k^+,P_k)
	=
	\frac{1}{1+\alpha_k}\mathcal{E}(z_k^+,P_k).
	$$
	Iterating this inequality proves the result. If $\nabla f(x_\ell)=0$ for some $\ell$, then $x_\ell$ is a minimizer and the method terminates; otherwise, all adaptive step sizes are well defined.
\end{proof}

\begin{proposition}[Accelerated rate for Adam-HNAG-s]\label{prop:synchronous-rate}
	Under the assumptions and parameter choices of Theorem~\ref{thm:lyapunov-contraction2}, suppose that every nonstationary iterate accepts the exact adaptive step
	$$
	\alpha_k=\sqrt{\frac{\eta_k}{2}},
	\qquad
	\eta_k=\bar\eta(P_k^{-1},\nabla f(x_k)).
	$$
	Then
	$$
	\rho_k:=\prod_{j=0}^k(1+\alpha_j)^{-1}=\mathcal{O}(k^{-2}),
	$$
	and hence
	$$
	\mathcal{E}(z_{k+1}^+,P_{k+1})=\mathcal{O}(k^{-2}).
	$$
\end{proposition}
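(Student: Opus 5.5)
The plan is to repeat the argument of Proposition~\ref{prop:nonnegative-feedback-rate}. The only change is the step where a lower bound on the metric is derived from the $P$-update, because Adam-HNAG-s updates $P$ implicitly through \eqref{synAHNAG:c} and evaluates $\eta_k$ with $P_k$ rather than the lagged $P_{k-1}$. The contraction itself is already given by Theorem~\ref{thm:lyapunov-contraction2}. It remains to show that the product $\rho_k=\prod_{j=0}^k(1+\alpha_j)^{-1}$ decays like $k^{-2}$ when $\alpha_k=\sqrt{\eta_k/2}$.

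\emph{Step 1 (metric lower bound).} Rewrite \eqref{synAHNAG:c} as $P_{k+1}=(1-\tilde\alpha_k)P_k+\tilde\alpha_k\gamma_kP_{k+1}^{-1}G_{k+1}^2$. Here $P_{k+1}\succ0$ is diagonal, which is guaranteed by the closed-form positive root in Algorithm~\ref{alg:adam-hnag-s}. Also $\gamma_k\ge0$ and $G_{k+1}^2\succeq0$, so the last term is a nonnegative diagonal matrix. Hence
\[
P_{k+1}\succeq(1-\tilde\alpha_k)P_k=\frac{1}{1+\alpha_k}P_k .
\]
The same bound can be read off the closed form, since the square root dominates $(1-\tilde\alpha_k)P_k$. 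Inducting gives $P_k\succeq\rho_{k-1}P_0$ with $\rho_{-1}=1$. This is slightly cleaner than the lagged case, because no shift from $\rho_{k-2}$ to $\rho_{k-1}$ is needed.

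\emph{Step 2 (stepsize lower bound).} As in the proof of Proposition~\ref{prop:nonnegative-feedback-rate}, for diagonal $P\succ0$ and $g\ne0$, setting $h=P^{-1}g$ gives $L\bar\eta(P^{-1},g)=h^\top Ph/h^\top h\ge\lambda_{\min}(P)$. Applying this with $P=P_k$ yields $\eta_k\ge\rho_{k-1}\mu_0/L$, where $\mu_0=\lambda_{\min}(P_0)$. Consequently $\alpha_k\ge c_0\sqrt{\rho_{k-1}}$ with $c_0=\sqrt{\mu_0/(2L)}$.

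\emph{Step 3 (growth of the product).} Set $s_k=\rho_{k-1}^{-1/2}\ge1$, so that $s_{k+1}=s_k\sqrt{1+\alpha_k}$ and $\alpha_k\ge c_0/s_k$. The same computation as before gives
\[
s_{k+1}-s_k\ge\frac{c_0}{\sqrt{1+c_0}+1}=:\delta_0>0 .
\]
Therefore $\rho_k=s_{k+1}^{-2}\le(1+(k+1)\delta_0)^{-2}=\mathcal O(k^{-2})$. Combining this with Theorem~\ref{thm:lyapunov-contraction2} gives $\mathcal E(z_{k+1}^+,P_{k+1})\le\rho_k\mathcal E(z_0^+,P_0)=\mathcal O(k^{-2})$. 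If some iterate is stationary, the method stops at a minimizer and the bound holds trivially.

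The only genuinely new point is Step 1. Since $P_{k+1}$ appears on both sides of \eqref{synAHNAG:c}, one must confirm that the implicit feedback term is positive semidefinite. This requires $P_{k+1}\succ0$, which is supplied by the positive-root closed form. Everything else transfers verbatim.
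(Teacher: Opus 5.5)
Your proposal is correct and follows the paper's proof essentially verbatim. It derives the metric lower bound $P_{k+1}\succeq(1+\alpha_k)^{-1}P_k$ from the synchronous update, then the Rayleigh-quotient bound $\alpha_k\ge\sqrt{\lambda_{\min}(P_0)/(2L)}\,\sqrt{\rho_{k-1}}$ using $P_k$ directly, then the same $s_k=\rho_{k-1}^{-1/2}$ growth argument; your remark that $P_{k+1}\succ0$ (supplied by the positive-root closed form) is needed for the implicit feedback term to be positive semidefinite is a small point the paper leaves implicit.
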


\begin{proof}
	Since $1-\widetilde{\alpha}_k=(1+\alpha_k)^{-1}$, the metric update gives
	$$
	P_{k+1}
	=
	\frac{1}{1+\alpha_k}P_k
	+
	\widetilde{\alpha}_k\gamma_kP_{k+1}^{-1}G_{k+1}^2
	\succeq
	\frac{1}{1+\alpha_k}P_k.
	$$
	Thus, $P_k\succeq\rho_{k-1}P_0$. The Rayleigh-quotient estimate in the proof of Proposition~\ref{prop:nonnegative-feedback-rate} gives
	$$
	\alpha_k
	\geq
	\sqrt{\frac{\lambda_{\min}(P_0)}{2L}}\sqrt{\rho_{k-1}}.
	$$
	The remaining argument is identical to that proof and yields $\rho_k=\mathcal{O}(k^{-2})$. The conclusion follows from Theorem~\ref{thm:lyapunov-contraction2}.
\end{proof}

For {\it Adam-HNAG-s}, the stepsize correction also converges under the continuity assumptions of Proposition~\ref{proposition1}, extended to the trial metric $P_{k+1}(\alpha)$. The boundedness can be also enforced by a projection; see Section \ref{sec:boundedness}. 

\section{Numerical Experiments}\label{sec:experiments}
This section presents three numerical studies. The first is a theory-aligned quadratic verification using Algorithms~\ref{alg:adam-hnag}--\ref{alg:adam-hnag-s} with $\mathrm{InnerLoop}=\mathrm{True}$. The logistic-regression experiments compare stabilized practical variants not covered by the exact theorem. The final online experiment is an out-of-scope stress test rather than a validation of the smooth convex theory.

\subsection{Quadratic diagnostic on a two-dimensional Laplacian problem}\label{subsec:laplacian-verification}

We first examine the behavior of the two proposed methods on a controlled quadratic problem with a known minimizer. We use the two-dimensional Laplacian benchmark from~\cite{chen2026hnagacceleratedgradientmethod} and compare {\it Adam-HNAG} with {\it Adam-HNAG-s}. Since $x^\star$ is known, the boundedness condition can be checked directly along the computed trajectories.
%Comparisons with baseline methods are deferred to the logistic-regression experiments in Section~\ref{subsec:deterministic_exp}.

The matrix $A$ is the linear finite element stiffness matrix for the Dirichlet Laplacian on $\Omega=[0,1]^2$, assembled with iFEM~\cite{chen2009ifem} on a uniform triangulation with mesh size $h=1/160$. We use the quadratic in error coordinates
$$
f(x)=\tfrac12(x-x^\star)^\top A(x-x^\star),\qquad x^\star=0,
$$
with $d=25{,}281$ unknowns. Equivalently, this is the homogeneous case. The eigenvalues of $A$
$$
\lambda_{k,l}=4\left(\sin^2\frac{k\pi h}{2}+\sin^2\frac{l\pi h}{2}\right),
\qquad k,l=1,\ldots,1/h-1,
$$
give $L=\lambda_{\max}=8\cos^2(\pi h/2)$ and $\kappa(A)\approx1.04\times10^4$.

%\LC{Note: do you set $\epsilon = 0$? compute $\bar \eta$ exactly? Also the function gap should be $f(x_k^+) - f(x^\star)$ not $f(x_k) - f(x^\star)$}
We implement {\it Adam-HNAG} and {\it Adam-HNAG-s}
(Algorithms~\ref{alg:adam-hnag} and~\ref{alg:adam-hnag-s}) with
\(\mathrm{InnerLoop}=\mathrm{True}\), so that the parameter choices match the
theorem-prescribed recursions. We set
\(R=2\|y_0-x^\star\|_\infty\) and verify a posteriori that
\(\|y_k-x^\star\|_\infty\le R\) throughout the run. The initial point is chosen
as \(x_0=y_0\), with each component drawn independently from
\(\operatorname{Unif}(0,1)\), and the initial preconditioner is
\[
P_0=0.05\,\frac{\|\nabla f(x_0)\|}{\sqrt d}\,I .
\]
To match the theoretical recursions exactly, no stabilization is used in this
verification experiment, i.e., \(\varepsilon=0\).

\begin{figure}[!htbp]
	\centering
	\begin{subfigure}{0.49\textwidth}
		\centering
		\includegraphics[width=\linewidth]{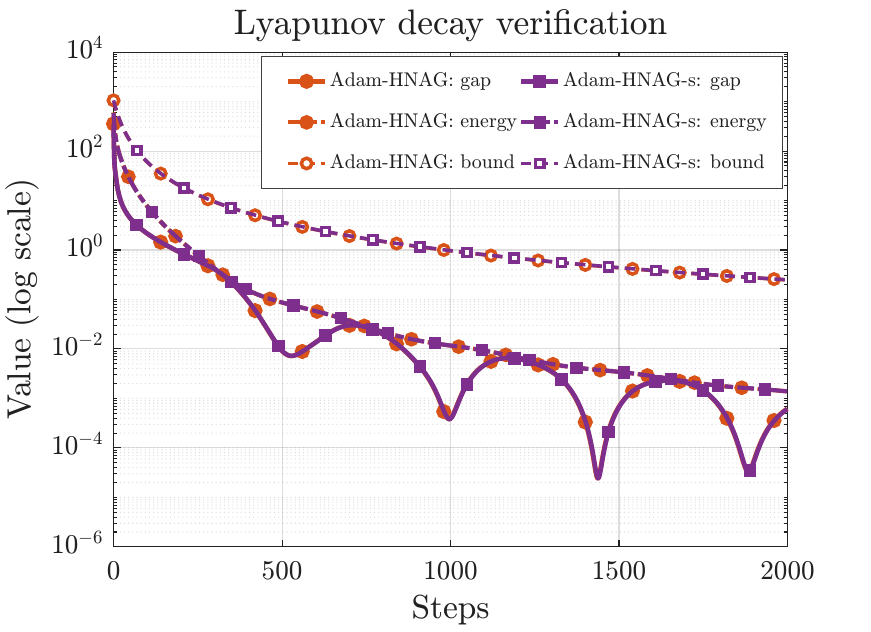}
	\end{subfigure}\hfill
	\begin{subfigure}{0.49\textwidth}
		\centering
		\includegraphics[width=\linewidth]{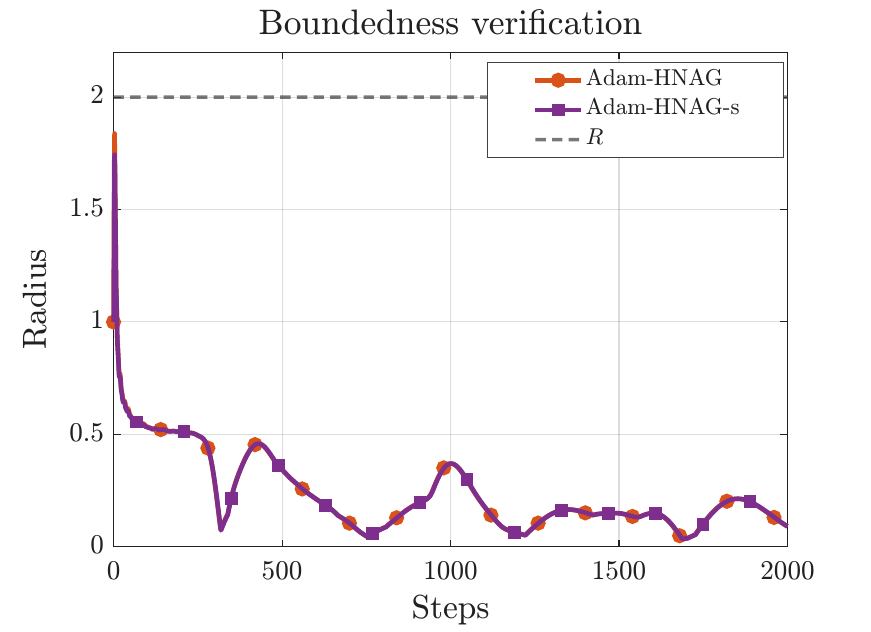}
	\end{subfigure}
	\caption{Theory-aligned verification.
		Left: decay of the auxiliary objective gap and the Lyapunov energy.
		Here ``gap'' denotes
		\(f(x_{k+1}^+)-f(x^\star)\), ``energy'' denotes
		\(\mathcal{E}(z_{k+1}^+,P_{k+1})\) \eqref{lyapunov2}, and ``bound'' denotes the
		theorem-predicted quantity
		\(
		\mathcal{E}(z_0^+,P_0)
		\prod_{i=0}^k(1+\alpha_i)^{-1}.
		\)
		The two methods produce nearly overlapping trajectories; markers
		distinguish them.
		Right: a posteriori verification of
		\(\|y_k-x^\star\|_\infty\le R\).}
	\label{fig:laplacian-theory-aligned}
\end{figure}

Figure~\ref{fig:laplacian-theory-aligned} shows that the Lyapunov energies satisfy the theorem-predicted product bounds, while the objective gaps decay alongside them and the two methods follow nearly identical trajectories. The stepsize inner loop requires no correction for {\it Adam-HNAG-s}; whenever it is triggered for {\it Adam-HNAG}, one correction suffices to accept the revised stepsize. The choice $R=2\|y_0-x^\star\|_\infty$ bounds $\|y_k-x^\star\|_\infty$ throughout this run. 
%\LC{These observations provide a diagnostic check, but they do not verify the contraction theorem because the figure uses raw iterates and a scaled objective reference rather than $x_k^+$ and the full Lyapunov bound.}

\subsection{Practical logistic-regression experiments}\label{subsec:deterministic_exp}

We consider the regularized binary logistic-regression objective
$$
\begin{aligned}
f(w,b)
&=
-\frac{1}{n}\sum_{i=1}^n
\left[
y_i\log\sigma(X_i^\top w+b)
+(1-y_i)\log\bigl(1-\sigma(X_i^\top w+b)\bigr)
\right]\\
&\quad+\frac{\lambda}{2}\|(w,b)\|^2,
\end{aligned}
$$
where $\sigma(z)=1/(1+e^{-z})$ and $\lambda=10^{-10}$ in all experiments. Since $\lambda>0$, the objective is coercive and admits a minimizer. Let
	\(X=[X_1,\ldots,X_n]^\top\in\mathbb{R}^{n\times d}\) and let
	\(\widetilde X=[X,\mathbf{1}]\) be the design matrix augmented with the
	intercept column. For all methods requiring a smoothness constant, we use the
	global bound
	\[
	L=\frac{\|\widetilde X\|_2^2}{4n}+\lambda.
	\]Writing $x=(w,b)$, we plot the regularized objective $f(x_k)$ rather than the gap $f(x_k)-f^\star$. The value $f^\star$ is unnecessary for this finite-iteration comparison because all methods are evaluated on the same objective.

\subsubsection{Methods compared.}

We compare five deterministic methods: gradient descent (GD), HNAG~\cite{chen2021unifiedconvergenceanalysisorder}, full-batch Adam \eqref{eq:adam}, {\it Adam-HNAG} (Algorithm~\ref{alg:adam-hnag}), and {\it Adam-HNAG-s} (Algorithm~\ref{alg:adam-hnag-s}). Since $x^\star$ is unknown for these logistic-regression problems, the trajectory radius cannot be verified a priori. We therefore treat $R$ as a scale parameter, set $R=2$, and use the practical variants of the proposed methods with $\mathrm{InnerLoop}=\mathrm{False}$. 

GD uses the step size $\eta=1/L$. For HNAG, we set
$$
\alpha_k=\frac{2}{k+1},
\qquad
P_k(2+\alpha_k)=\alpha_k^2L,
\qquad
\alpha_k\beta_kP_{k-1}=\frac{1}{L},
$$
which corresponds to the scalar-$P_k$, $\gamma_k=0$ case of \eqref{eq:AdamHNAG}. For full-batch Adam, we use $\beta_1=0.9$ and $\beta_2=0.999$. Since no theoretically optimal learning rate is available in this convex setting, we test
$$
\eta\in\{10^{-4},10^{-3},10^{-2},5\times10^{-2},10^{-1}\}
$$
and report the run with the lowest final objective.
%\AI{Note: the selected Adam rate is \(0.1\), the boundary of the stated grid, in
%every supplied plot. Please extend the grid until the selected value is
%interior, state whether bias correction is used, and add standard NAG plus one
%adaptive baseline such as AdaGrad, AMSGrad, or FLAG/FLARE. Otherwise replace
%the phrase \emph{best-tuned Adam} by \emph{best Adam run in the stated grid}.}

All primary and auxiliary variables are initialized at zero. The initial
	adaptive matrices are \(V_0=0\) for Adam and
	\(
	P_0=0.05\|\nabla f(x_0)\|I/\sqrt d
	\)
	for {\it Adam-HNAG} and {\it Adam-HNAG-s}. For Adam, the inverse preconditioner is implemented as
	\((\sqrt{V_k}+\varepsilon I)^{-1}\). For all three adaptive methods, the stabilization constant is
	\(\varepsilon=10^{-8}\).

Table~\ref{tab:parameter-regimes} compares the theorem, the quadratic verification, and the practical logistic-regression experiments.
\begin{table}[!htbp]
	\centering
	\scriptsize
	\setlength{\tabcolsep}{5pt}
	\renewcommand{\arraystretch}{1.25}
		\caption{\YY{Parameter choices in the theorem, quadratic verification, and logistic-regression experiments.}}
	\label{tab:parameter-regimes}
	\begin{tabular}{p{0.13\textwidth}p{0.29\textwidth}p{0.23\textwidth}p{0.23\textwidth}}
		\toprule
			Parameter & Theorem-prescribed choice & Quadratic verification & Practical logistic experiments \\
			\midrule
			Step-size rule &
			$\eta_k=\bar\eta(P_{k-1}^{-1},\nabla f(x_k))$ for Adam-HNAG and $\bar\eta(P_k^{-1},\nabla f(x_k))$ for Adam-HNAG-s; $0<\alpha_k\le\sqrt{\eta_k/2}$ &
			theorem rule; after rejection, $\alpha_k\leftarrow\sqrt{\eta_{k+1}^{\rm tr}/2}$ with $\eta_k$ fixed &
			stabilized formula with no inner loop \\[0.4ex]
			$\gamma_k$ &
			$\alpha_k/R^2$ for Adam-HNAG and $\widetilde{\alpha}_k/R^2$ for Adam-HNAG-s &
			theorem choice &
			same formula \\[0.4ex]
			$R$ &
			a priori trajectory bound &
			$2\|y_0-x^\star\|_\infty$, cf. Fig.~\ref{fig:laplacian-theory-aligned} &
			$R=2$ as a scale parameter \\[0.4ex]
		$P_0$ &
			any diagonal positive definite matrix &
			$0.05\|\nabla f(x_0)\|I/\sqrt d$ &
			$0.05\|\nabla f(x_0)\|I/\sqrt d$ \\[0.4ex]
			$\mathrm{InnerLoop}$ &
			 $\mathrm{True}$&
			$\mathrm{True}$ &
			$\mathrm{False}$ \\[0.4ex]
		$\varepsilon$ &
		$0$ &
			$0$ &
		$10^{-8}$ \\[0.4ex]
			Reported point &
			$x_k^+$ &
			$x_k^+$ &
			$x_k$ \\[0.4ex]
			Consistency condition &
			enforced by inner loop &
			enforced by inner loop &
			monitored a posteriori in Fig.~\ref{fig:lyapunov-condition} \\
		\bottomrule
	\end{tabular}
\end{table}

\begin{figure}[h]
	\centering
	\begin{subfigure}{0.455\textwidth}
		\centering
		\includegraphics[width=\linewidth]{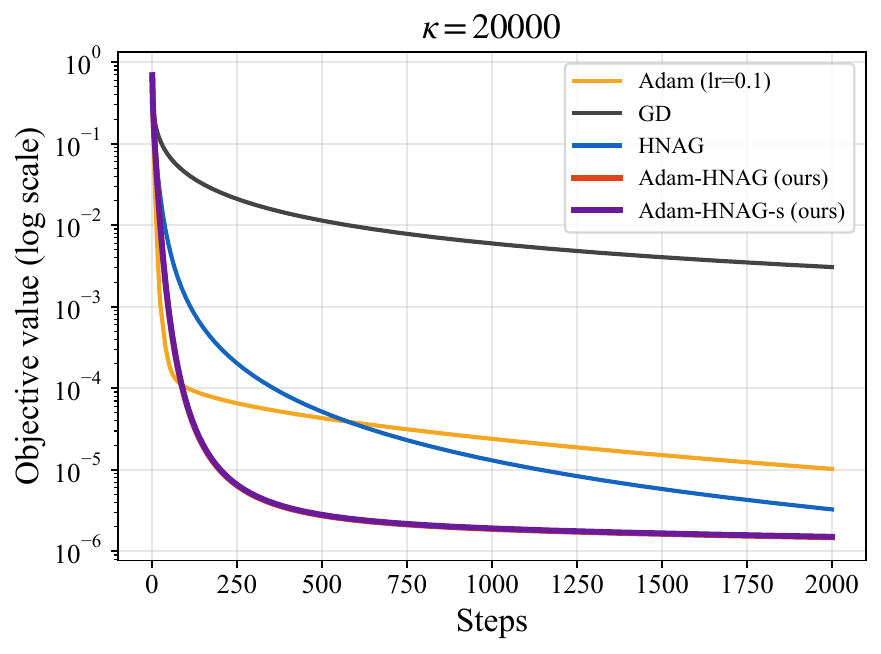}
	\end{subfigure}\hfill
	\begin{subfigure}{0.455\textwidth}
		\centering
		\includegraphics[width=\linewidth]{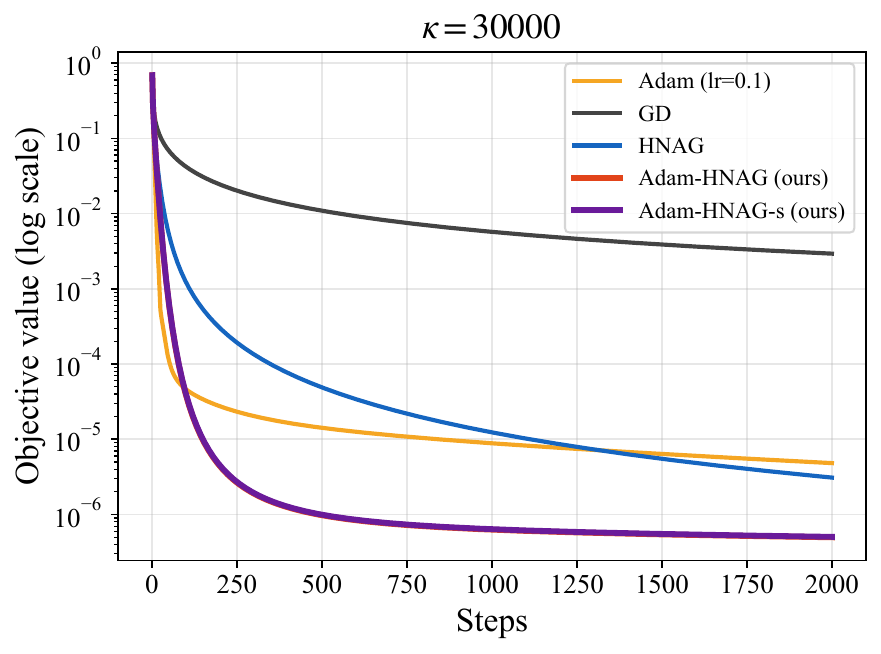}
	\end{subfigure}
	
	\medskip
	
	\begin{subfigure}{0.455\textwidth}
		\centering
		\includegraphics[width=\linewidth]{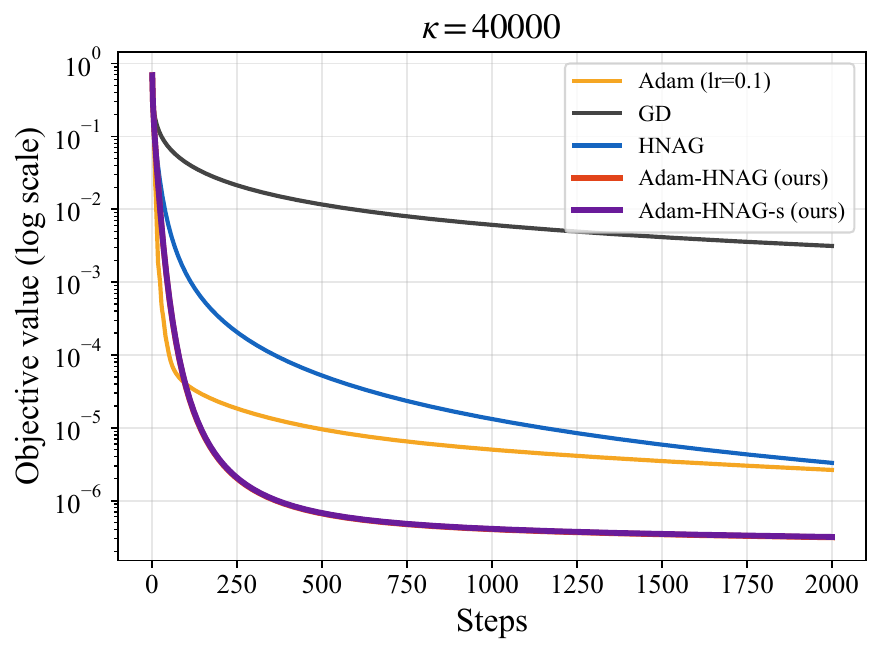}
	\end{subfigure}\hfill
	\begin{subfigure}{0.455\textwidth}
		\centering
		\includegraphics[width=\linewidth]{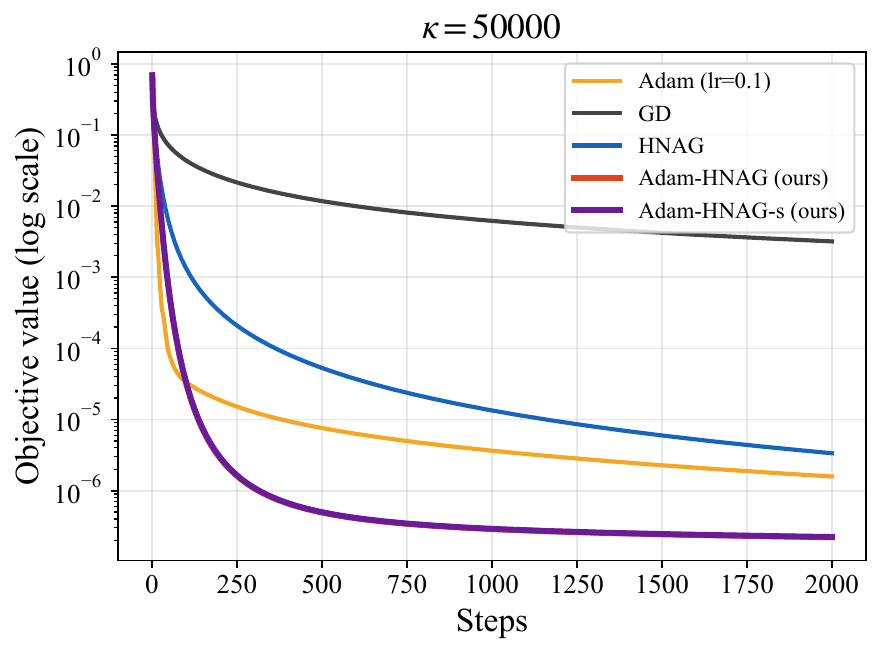}
	\end{subfigure}
	\caption{\YY{Comparison of the optimization methods across different prescribed spectral scalings \(\kappa\).}}
	%\caption{\LC{Comparison across prescribed values of $\kappa$. The spectral scale, margins, labels, and smoothness constant also change, so the panels do not isolate the effect of conditioning.}}
	\label{fig:different-kappa}
\end{figure}%\AI{Note: the supplied panels show only the selected Adam run and use mostly
%solid curves. Please regenerate the figures with print-safe method-specific
%line styles. If nonselected Adam rates are added, show them as thin dotted
%curves and describe only the delivered curves in the response letter.}

\subsubsection{Synthetic data with different spectral scalings.}

We compare datasets generated with different prescribed values of
	$\kappa$. This experiment does not isolate conditioning because the
	construction also changes the spectral scale, smoothness constant, margins,
	and labels.
Specifically, we set
\[
X=U\Sigma V^\top,
\]
where $U\in\mathbb{R}^{n\times d}$ has orthonormal columns and
	$V\in\mathbb{R}^{d\times d}$ is orthogonal. Both matrices are obtained by
	QR factorization of standard Gaussian matrices. We define
	$\Sigma=\operatorname{diag}(\sigma_1,\ldots,\sigma_d)$, where
	\[
	\sigma_i
	=
	1+\frac{i-1}{d-1}\bigl(\sqrt{\kappa}-1\bigr),
	\qquad i=1,\ldots,d.
	\]
	Thus,
	$\operatorname{cond}(X)=\sqrt{\kappa}$ and
	$\operatorname{cond}(X^\top X)=\kappa$.
Since $\sigma_{\max}=\sqrt{\kappa}$, increasing $\kappa$ also
	increases $\|X\|_2$ and the smoothness constant $L$.
We consider
\[
\kappa\in\{20000,30000,40000,50000\}.
\]
The binary labels are generated by
	\[
	\begin{aligned}
		y_i&=\mathbf{1}\{x_i^\top w+\xi_i>0\},\\
		w&=0.1z,\qquad z\sim\mathcal{N}(0,I_d),\\
		\xi_i&=0.3\zeta_i,\qquad \zeta_i\sim\mathcal{N}(0,1),
	\end{aligned}
	\]
	where $x_i^\top$ is the $i$-th row of $X$.
We use the same realizations of $U$, $V$, $z$, and
	$\{\zeta_i\}_{i=1}^n$ for every $\kappa$. Changing $\Sigma$ nevertheless
	changes $X$, the margins, and the resulting labels.
For each fixed
	$\kappa$, all methods are evaluated on exactly the same dataset. Accordingly,
	comparisons across $\kappa$ are interpreted as performance across a family of
	jointly changing spectral instances, rather than as causal evidence for the
	effect of conditioning alone. In all experiments, \(n=500\) and \(d=200\), and each method is run for \(2000\) full-gradient iterations.
%\AI{Note: A controlled conditioning study would instead fix
	%$\sigma_{\max}$, decrease $\sigma_{\min}$, preserve the labels or margins,
	%and report results over several specified random seeds.}

In the early stage, the proposed methods reduce the objective rapidly. The selected Adam run is competitive over the first few hundred iterations, whereas {\it Adam-HNAG} and {\it Adam-HNAG-s} attain lower values later. HNAG provides the Hessian-driven correction, while Adam provides feedback through adaptive preconditioning. Adam-HNAG combines both mechanisms.

\subsubsection{Empirical check of the consistency condition}
In the logistic-regression experiments, the stepsize inner loop is disabled. We therefore check \emph{a posteriori} whether the key inequality in Theorems~\ref{thm:lyapunov-contraction} and~\ref{thm:lyapunov-contraction2} holds along the computed trajectories.

\begin{figure}[!htbp]
	\centering
	\begin{subfigure}{0.455\textwidth}
		\centering
		\includegraphics[width=\linewidth]{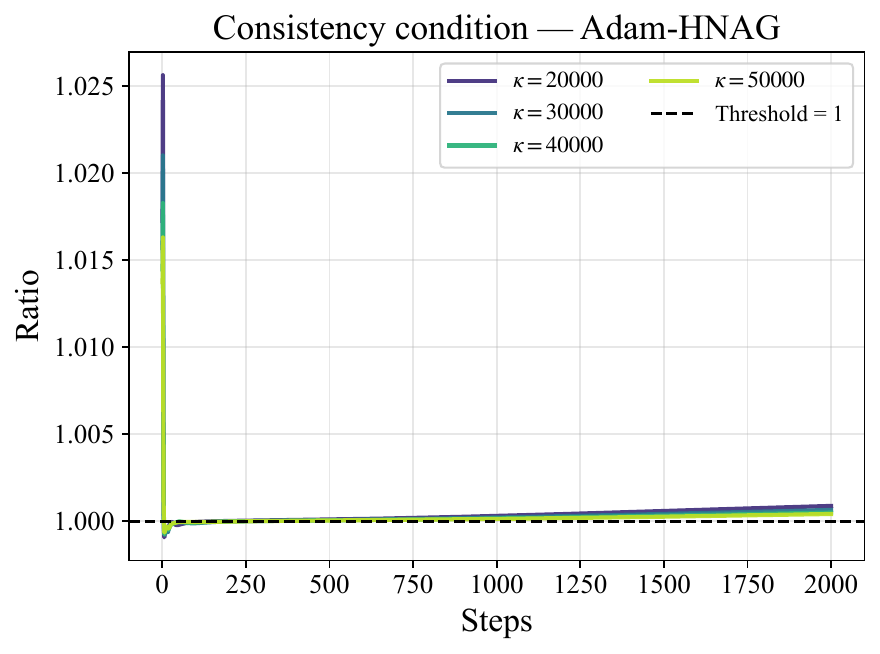}
	\end{subfigure}\hfill
	\begin{subfigure}{0.455\textwidth}
		\centering
		\includegraphics[width=\linewidth]{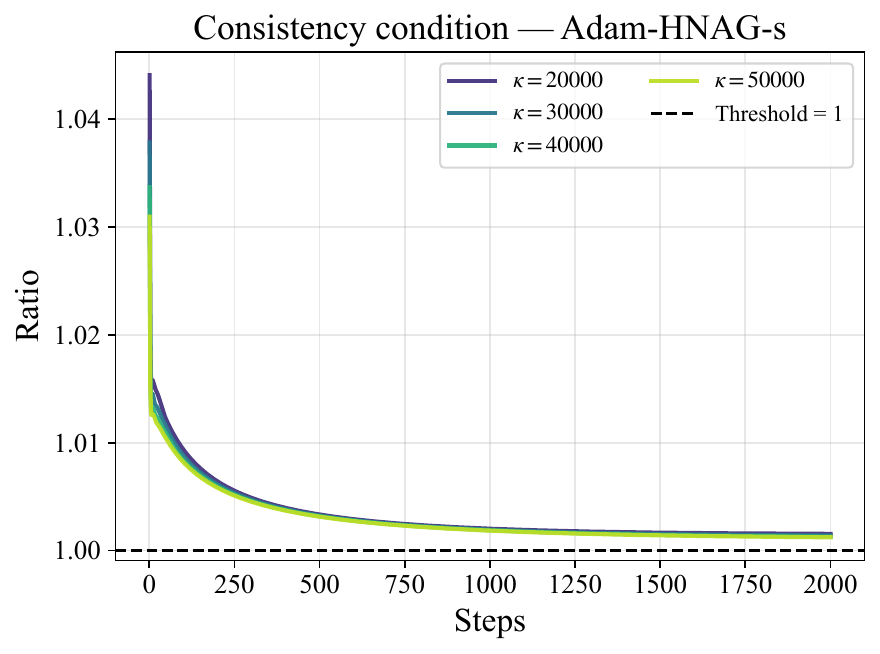}
	\end{subfigure}\hfill
	\caption{
		Empirical evaluation of the consistency condition for {\it Adam-HNAG} (left) and {\it Adam-HNAG-s} (right) across different condition numbers $\kappa$.
		The consistency condition requires the ratio to be not less than $1$.
	}
	\label{fig:lyapunov-condition}
\end{figure}

For both schemes, let
\[
\eta_k=\bar{\eta}(P_{k-\delta}^{-1},\nabla f(x_k)),
\qquad
\alpha_k=\sqrt{\eta_k/2},
\]
where
\[
\delta=
\begin{cases}
	1, & \text{for {\it Adam-HNAG}},\\
	0, & \text{for {\it Adam-HNAG-s}}.
\end{cases}
\]
Then the consistency condition can be written uniformly as
\begin{equation}
	\mathrm{Ratio}
	=
	\frac{\eta_{k+1}(1+\alpha_k)^{2-\delta}}{2\alpha_k^2}
	\ge 1.
\end{equation}
%Equivalently,
%\[
%\mathrm{Ratio}
%=
%\begin{cases}
%\dfrac{\eta_{k+1}(1+\alpha_k)}{2\alpha_k^2}, & \text{for {\it Adam-HNAG}},\\[1.2ex]
%\dfrac{\eta_{k+1}(1+\alpha_k)^2}{2\alpha_k^2}, & \text{for {\it Adam-HNAG-s}}.
%\end{cases}
%\]

The empirical ratios are reported in Figure~\ref{fig:lyapunov-condition}.
{\it Adam-HNAG} exhibits violations of the threshold $1$ during the initial
iterations; therefore the contraction theorem does not apply from \(k=0\) to
these no-inner-loop runs. The ratio later rises above $1$. {\it Adam-HNAG-s}, by
contrast, satisfies the condition throughout, with no violation observed along the
entire trajectory. This is consistent with the weaker consistency requirement of
{\it Adam-HNAG-s}, whose condition involves the larger factor $(1+\alpha_k)^2$.

%\AI{These plots are diagnostics, not a proof for the stabilized implementation.
%Please report the minimum ratio, the number of violations, and the first index
%after which the condition remains satisfied. A contraction may be restarted
%from that index with a new Lyapunov constant, but the earlier steps cannot be
%included in the same telescoping estimate.}

\subsubsection{Real-world data}

We use the \texttt{colon-cancer} dataset from the LIBSVM repository~\cite{Chang2011LIBSVMAL}, with $n=62$ samples and $d=2{,}000$ features. Since $d\gg n$, the problem is underdetermined and poorly conditioned under the small regularization used here. We use the data without additional preprocessing and run each method for $500$ full-gradient iterations with the same hyperparameters as in the synthetic experiments.

\begin{figure}[!htbp]
	\centering
	\begin{subfigure}{0.455\textwidth}
		\centering
		\includegraphics[width=\linewidth]{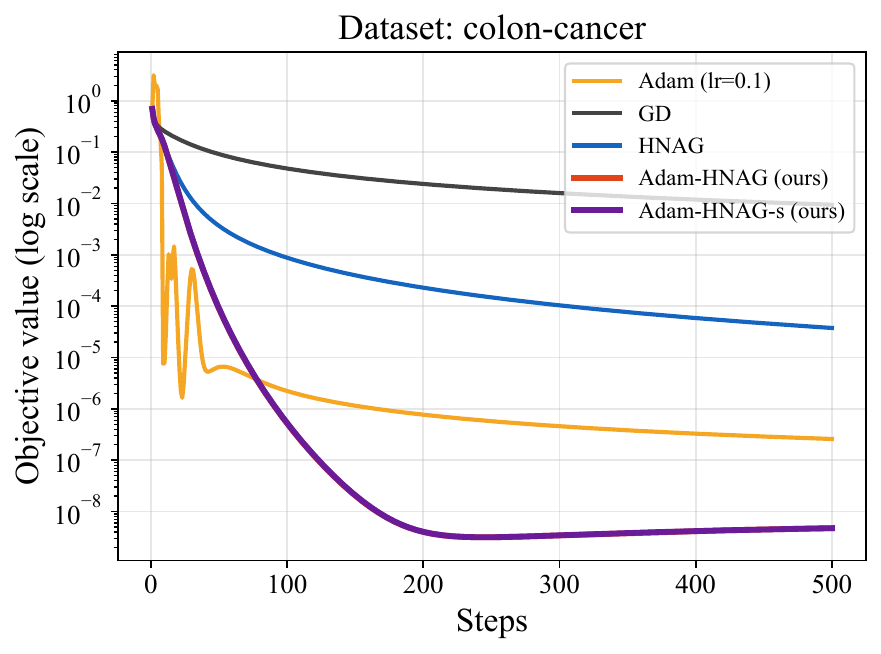}
	\end{subfigure}\hfill
	\begin{subfigure}{0.455\textwidth}
		\centering
		\includegraphics[width=\linewidth]{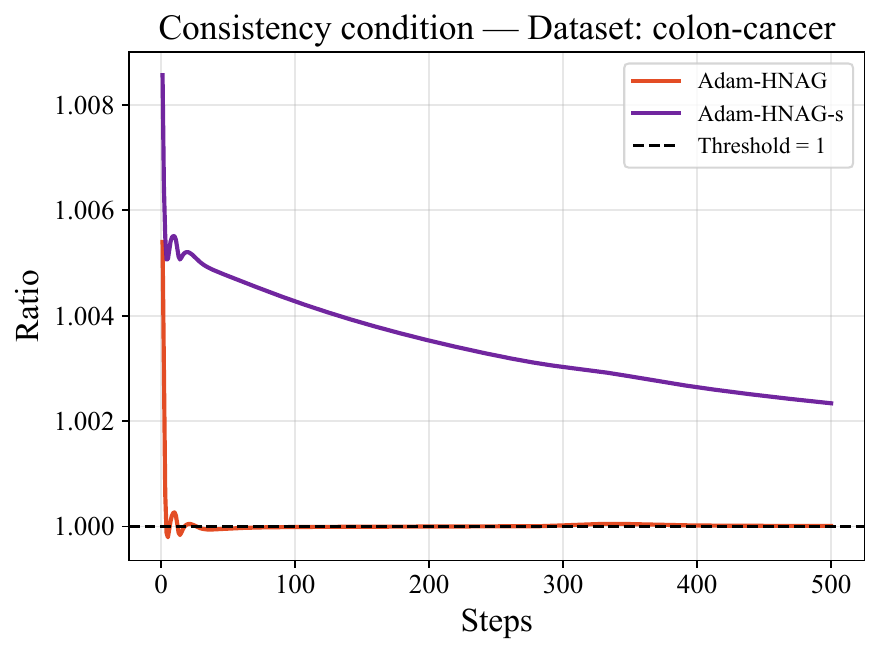}
	\end{subfigure}\hfill
	\caption{
		Left: regularized objective value on \texttt{colon-cancer}. 
		Right: empirical evaluation of the consistency condition on 
		\texttt{colon-cancer}. The consistency condition requires 
		the ratio to be not less than $1$.
	}
	\label{fig:real_data_colon-cancer}
\end{figure}

Figure~\ref{fig:real_data_colon-cancer} (left) shows the regularized objective. Within the stated iteration budget, {\it Adam-HNAG} and {\it Adam-HNAG-s} attain lower values than GD, HNAG, and the selected Adam run. Both methods reach values of order $10^{-8}$ after about $200$ iterations, whereas the baselines remain above this level after $500$ iterations. On this dataset, the results indicate that the proposed adaptive preconditioning remains effective in the high-dimensional regime $d\gg n$.

Figure~\ref{fig:real_data_colon-cancer} (right) reports the consistency diagnostic. {\it Adam-HNAG} shows small initial violations before the ratio stabilizes above the threshold $1$, whereas {\it Adam-HNAG-s} satisfies the condition throughout.

Across the fixed-objective experiments, {\it Adam-HNAG} and {\it Adam-HNAG-s} produce nearly identical curves. These tests therefore do not distinguish the two update orders. The next online example lies outside the theory and serves only as a stress test in which their behavior differs.

\subsection{A classical counterexample for Adam}
Finally, we include the classical synthetic counterexample of Reddi et al.~\cite{reddi2019convergenceadam}. Although this example arises in the online convex optimization setting described in \eqref{eq:online-regret}, it is used here only as a controlled stress test for Adam-type methods in a regime where Adam is known to behave pathologically. Following~\cite{reddi2019convergenceadam}, the one-dimensional convex domain $\mathcal F=[-1,1]$ and the loss sequence
$$
f_t(x)=
\begin{cases}
	1010x, & \text{for} \quad t \bmod 101 = 1,\\
	-10x, & \text{otherwise}
\end{cases}
$$
are considered. The optimal solution is $x^\star=-1$.

Adam, AMSGrad~\cite{reddi2019convergenceadam}, {\it Adam-HNAG}, and {\it Adam-HNAG-s} are compared. Since this online benchmark does not come with a meaningful smoothness constant $L$, the quantity $\bar{\eta}(P^{-1},\nabla f(x))$ used in the convex convergence theory is not directly applicable.
Moreover, the theoretical couplings among $\alpha_k$, $\beta_k$, and $\gamma_k$ were derived for the fixed smooth convex setting and are not tailored to this adversarial online example. We therefore adopt a decoupled practical parametrization: the same metric-ratio form is used for $\alpha_k$, while $\beta$ and $\gamma$ are tuned independently. Specifically, we set
\[
\alpha_k
=
\eta \sqrt{
	\frac{\nabla f(x_k)^\top P_{k-\delta}^{-1}\nabla f(x_k)}
	{\nabla f(x_k)^\top P_{k-\delta}^{-2}\nabla f(x_k)}
},
\]
and use the same hyperparameters $(\eta,\beta,\gamma)=(10^{-3},0.01,0.1)$ for both {\it Adam-HNAG} and {\it Adam-HNAG-s}, with the switch parameter $\delta=1$ for {\it Adam-HNAG} and $\delta=0$ for {\it Adam-HNAG-s}.
For Adam and AMSGrad, the choice in~\cite{reddi2019convergenceadam},
$$
(\eta,\beta_1,\beta_2)=(0.01,0.9,0.99),
$$
is used.

All methods are initialized at \(x=0\), with the auxiliary and moment
variables initialized at zero. For the proposed methods, consistently with
the other experiments, we set \(P_{-1}=P_0=0.05|\nabla f_1(x_0)|\) in this one-dimensional example. We use \(\varepsilon=10^{-8}\) for numerical stability.

%\AI{Note: please state the projection onto \([-1,1]\), initialization, stabilization, horizon, and exact regret calculation, and include a sensitivity check for \((\eta,\beta,\gamma)\). A fixed smooth-convex batch limit-cycle example~\cite{bock2022cycles} would be a stronger in-scope stress test.}

\begin{figure}[!htbp]
	\centering
	\begin{subfigure}{0.455\textwidth}
		\centering
		\includegraphics[width=\linewidth]{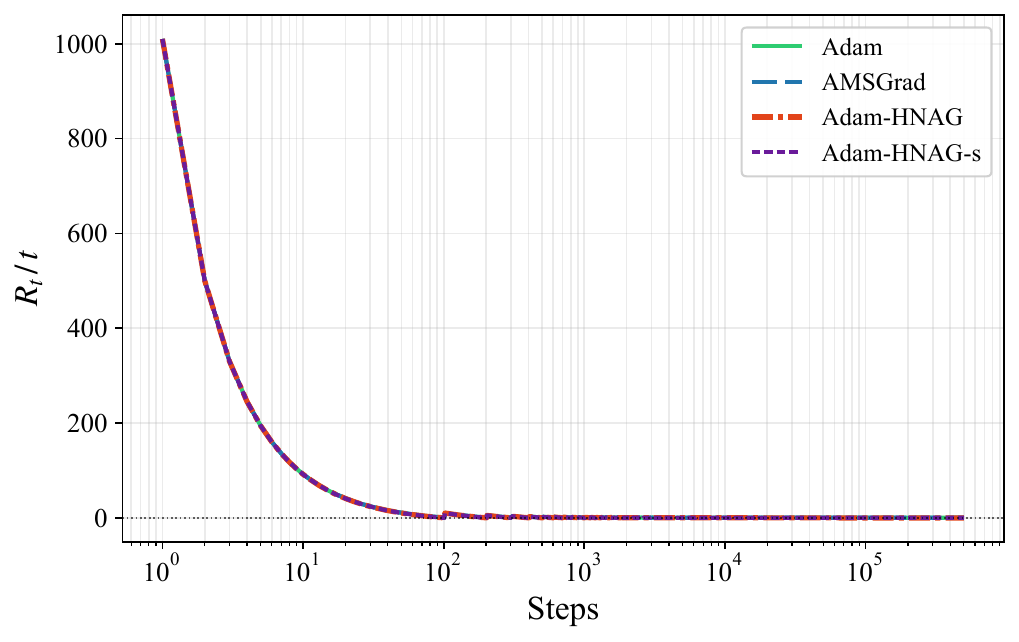}
	\end{subfigure}\hfill
	\begin{subfigure}{0.455\textwidth}
		\centering
		\includegraphics[width=\linewidth]{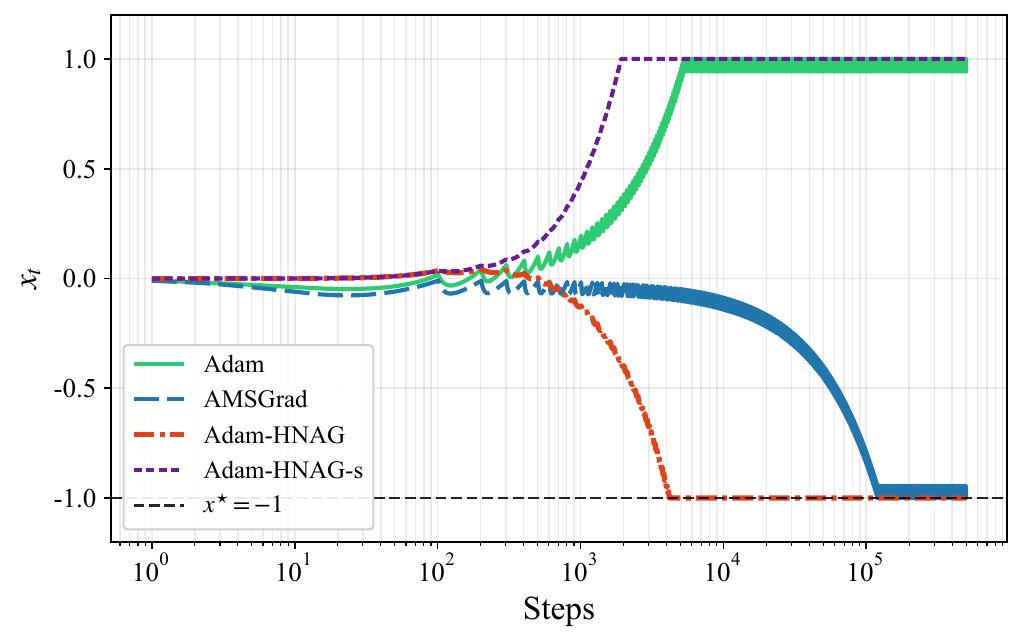}
	\end{subfigure}\hfill
	\caption{Classical synthetic counterexample of Reddi et al.~\cite{reddi2019convergenceadam}. Left: average regret \(R_t/t\). Right: iterate trajectory \(x_t\).}
	\label{fig:redditest}
\end{figure}

This benchmark is a stress test, not a validation of the convergence theory. The theory concerns a fixed smooth convex objective, whereas the construction of Reddi et al.~\cite{reddi2019convergenceadam} uses a time-varying sequence of online losses. We report both the trajectory $x_t$ and the average regret. Under the stated parameters, {\it Adam-HNAG} remains stable, whereas Adam and the synchronous variant {\it Adam-HNAG-s} drift toward the nonoptimal endpoint. In this experiment, {\it Adam-HNAG} also reaches the correct endpoint faster than AMSGrad, the remedy proposed in~\cite{reddi2019convergenceadam}. This single stress test shows that the methods can behave differently, but it does not support a general stability comparison.

\section{Conclusion}\label{sec:conclusion}

We introduced an Adam-inspired flow by approximately splitting the momentum and square-gradient metric dynamics and adding the gradient correction used in HNAG. This construction yields two deterministic methods, {\it Adam-HNAG} and {\it Adam-HNAG-s}. For their exact unstabilized forms, we prove a product Lyapunov contraction under a trajectory bound and a one-step consistency condition; the stepsize inner loop enforces the latter. When the exact adaptive steps are accepted, the product estimate gives an $O(k^{-2})$ objective-value bound for both methods. The gradient-magnitude feedback in the preconditioner update distinguishes these methods from HNAG and standard accelerated gradient methods.

The main contribution is a Lyapunov coupling between a changing diagonal metric and a Hessian-driven accelerated correction. Natural next steps are to remove the trajectory bound and extend the analysis to stochastic and nonconvex problems under assumptions that can be verified in practice.

\FloatBarrier
\bibliographystyle{elsarticle-num}
\bibliography{references}

% \AI{The following entries are cited by the newly added related-work paragraph
% in the introduction. Please move them into references.bib.}
% @inproceedings{levy2018online,
%   title={Online adaptive methods, universality and acceleration},
%   author={Levy, Kfir Y. and Yurtsever, Alp and Cevher, Volkan},
%   booktitle={Advances in Neural Information Processing Systems (NeurIPS)},
%   year={2018}}
% @inproceedings{kavis2019unixgrad,
%   title={{UniXGrad}: A universal, adaptive algorithm with optimal guarantees
%          for constrained optimization},
%   author={Kavis, Ali and Levy, Kfir Y. and Bach, Francis and Cevher, Volkan},
%   booktitle={Advances in Neural Information Processing Systems (NeurIPS)},
%   year={2019}}
% @inproceedings{joulani2020simpler,
%   title={A simpler approach to accelerated optimization: iterative averaging
%          meets optimism},
%   author={Joulani, Pooria and Raj, Anant and Gy{\"o}rgy, Andr{\'a}s and
%           Szepesv{\'a}ri, Csaba},
%   booktitle={International Conference on Machine Learning (ICML)},
%   year={2020}}
% @inproceedings{ene2021adaptive,
%   title={Adaptive gradient methods for constrained convex optimization and
%          variational inequalities},
%   author={Ene, Alina and Nguyen, Huy L. and Vladu, Adrian},
%   booktitle={AAAI Conference on Artificial Intelligence},
%   year={2021}}

\end{document}